\newtheorem{theorem}{Theorem}[section]
\newtheorem{proposition}{Proposition}[section]
\newtheorem{definition}{Definition}[section]
\newtheorem{corollary}{Corollary}[section]
\newtheorem{lemma}{Lemma}[section]
\newcommand{\rad}{\operatorname{rad}}
\begin{document}

\title{Cell algebra stuctures on monoid and twisted monoid algebras}

\author{ Robert May}
\date{}
\maketitle

\section {Introduction}

In \cite{May} a class of algebras called cell algebras is defined generalizing the cellular algebras of Graham and Lehrer in \cite{GL}.  (These algebras had previously been introduced and studied as ``standardly based algebras'' by Du and Rui in \cite{DR}.)  It was shown in \cite{May} that if $R$ is any commutative domain with unit and $M$ is either a transformation semigroup $\mathcal{T}_r $ or a partial transformation semigroup $\mathcal{PT}_r $ then the semigroup algebra $R[M]$ is a cell algebra with a standard cell basis.  In this paper we show that the algebra $R[M]$ is a cell algebra for any finite monoid $M$ with the property (called the ``$R$-C.A.'' property) that for every $\mathcal{D}$-class $D$ the group algebra $R[G_D ]$ for the Schutzenberger group of $D$ has a cell algebra structure.  The choice of a cell basis for each $R[G_D ]$ determines a standard cell basis for $R[M]$.  If all the groups $G_D$ for a given $M$ are symmetric groups (as for $\mathcal{T}_r $ or $\mathcal{PT}_r $) then $M$ satisfies the $R$-C.A. property for any $R$.  If $k$ is an algebraically closed field of characteristic $0$ or $p$ where $p$ does not divide the order of any $G_D$,  then any finite monoid $M$ satisfies the $k$-C.A. property. 

We use the standard cell basis obtained for $M$ and the properties of cell algebras given in \cite{May} to derive properties of the algebras $R[M]$ for such monoids.  For example, if $M$ is a regular semigroup and each cell algebra $R[G_D ]$ has the property that $\left( {\Lambda _D } \right)_0  = \Lambda _D $ then $R[M]$ is quasi-hereditary.  As a special case, this yields the theorem of Putcha \cite{Pu} that the complex monoid algebra $\mathbb{C}[M]$ is quasi-hereditary for any finite regular monoid.  If $M$ is an inverse semigroup satisfying the $R$-C.A. property, then we show that $R[M]$ is semi-simple if and only if each cell algebra $R[G_D ]$ is semi-simple.  When $R$ is an algebraically closed field $k$ of good characteristic, this yields that $k[M]$ is semi-simple for any inverse semi-group (a result long known even without the algebraically closed condition; see \cite{CP} which cites \cite{Og}).

We also show that if $\pi :M \times M \to R$ is any twisting satisfying a certain simple compatibility condition then the twisted monoid algebra $R^\pi  [M]$ is also a cell algebra (with the same sets $\Lambda ,L,R$
 and cell basis $C$ as the cell algebra $R[M]$).

In \cite{East}, \cite{Wil}, and \cite{GX}, East, Wilcox, and Guo and Xi have explored analogous results for cellular algebra structures on semigroups and twisted semigroups.  Many of the complications in their work arise from the need to construct the involution anti-automorphism $ * $ required by the definition of a cellular algebra.  Cell algebras require no such mapping, and the results obtained for such algebras appear to be both considerably simpler and of more general applicability than the corresponding result for cellular algebras.  Yet questions such as whether an algebra is quasi-hereditary or semi-simple appear to be not much harder to address when given a cell basis than when given a cellular basis.

\section  {Properties of finite monoids}

In this section we review some facts about finite monoids and the standard Green's relations on a finite monoid $M$.  Most of these results are well-known; see, for example, \cite{CP}, \cite{GM}, or \cite{How} for references.

Green's relations $\mathcal{L ,\: R, \: J, \: H}$ and $\mathcal{D}$  are defined, for $x,y \in M$, by
\[x\mathcal{L}y \Leftrightarrow Mx = My\:,\,x\mathcal{R}y \Leftrightarrow xM = yM\:,\,x\mathcal{J}y \Leftrightarrow MxM = MyM\]
\[x\mathcal{H}y \Leftrightarrow x\mathcal{L}y{\text{ and }}x\mathcal{R}y \]
\[x\mathcal{D}y \Leftrightarrow x\mathcal{L}z{\text{ and }}z\mathcal{R}y{\text{ for some }}z \in M \Leftrightarrow x\mathcal{R}z{\text{ and }}z\mathcal{L}y{\text{ for some }}z \in M.\]
 If $\mathcal{K}$ is one of Green's relations and $m \in M$, denote by $K_m $ the $\mathcal{K}$ -class containing $m$. Write $\mathbb{L,\: R,\: J,\: H,}$ or $\mathbb{D}$ for the set of all $\mathcal{L,\: R,\: J,\: H,}$ or $\mathcal{D}$ classes in $M$.
 
According to ``Green's lemma'', if $a \mathcal{R} b $ and $s,t \in M $ are such that $as=b \, , \,bt=a $, then the map $ x \to xs  $ is a bijective, $\mathcal{R}$-class preserving map from $L_a$ onto $L_b$ with inverse the map $ y \to yt $.  There is a dual result when $a \mathcal{L} b $

For a finite monoid $M$, one has $\mathcal{D} = \mathcal{J}$ (see chapter 5 of \cite{Gr}) and the set $\mathbb{D}$ of $\mathcal{D}$-classes inherits a partial order defined, for $x,y \in M$, by $D_x  \leqslant D_y  \Leftrightarrow x \in MyM$.  Evidently $D_{xy}  \leqslant D_x $ and $D_{xy}  \leqslant D_y $ for any $x,y \in M$.

Let $R$ be a commutative domain with unit.  For any subset $S \subseteq M$, write $R[S]$ for the free $R$-module with basis $S$.  Let $A = R[M]$, the monoid algebra for $M$, and define $A^D  = \mathop  \oplus \nolimits_{D' \leqslant D} R[D']\,,\,\hat A^D  = \mathop  \oplus \nolimits_{D' < D} R[D']$.  Then both $A^D $ and $\hat A^D $ are two sided ideals in $A$. 

For a given $D \in \mathbb{D}$, choose a ``base element'' $\gamma  \in D$
 and consider the $\mathcal{H}$-class $H = H_\gamma   \subseteq D$.  Let $RT(H) = \left\{ {m \in M:Hm \subseteq H} \right\}$ and for $m \in RT(H)$ define the right translation $r_m :H \to H$ by $(h)r_m  = hm\,,\,h \in H$.  Each map $r_m $ is a bijection from $H$ to $H$ and $G_H^R  = \left\{ {r_m :m \in RT(H)} \right\}$ is a group, the (right) Schutzenberger group for $H$.  (Up to isomorphism, $G_H^R $ depends only on $D$, not the particular $\mathcal{H}$-class $H$ contained in $D$.)  Evidently $r_m r_n  = r_{mn} $ for any $m,n \in RT(H)$.  The map $\phi _H^R :G_H^R  \to H$ defined by $\phi _H^R :r_m  \mapsto \gamma _D m$ is bijective and extends linearly to a bijective map $\phi _H^R :R\left[ {G_H^R } \right] \to R\left[ H \right]$ of free $R$-modules.  For any $m,n \in RT(H)$, $\phi _H^R (r_m ) \cdot n = (\gamma _D m) \cdot n = \gamma _D (mn) = \phi _H^R \left( {r_{mn} } \right) = \phi _H^R \left( {r_m r_n } \right)$, so in general for any $x \in R\left[ {G_H^R } \right]$ we have by linearity $\phi _H^R \left( x \right) \cdot n = \phi _H^R \left( {xr_n } \right)$. 

In parallel fashion, let $LT(H) = \left\{ {m \in M:mH \subseteq H} \right\}$ and for $m \in LT(H)$ define the left translation $l_m :H \to H$ by $l_m (h) = mh\,,\,h \in H$.  Then again each map $l_m $ is a bijection from $H$ to $H$ and $G_H^L  = \left\{ {l_m :m \in LT(H)} \right\}$ is a group, the (left, or dual) Schutzenberger group for $H$.  
Then $l_m l_n  = l_{mn} $ for any $m,n \in LT(H)$, the map $\phi _H^L :G_H^L  \to H$ defined by $\phi _H^L :l_m  \mapsto m\gamma _D $ is bijective, and the linearly extended map $\phi _H^L :R\left[ {G_H^L } \right] \to R\left[ H \right]$ is a bijection of free $R$-modules.  The map $\psi  = \left( {\phi _H^R } \right)^{ - 1}  \circ \phi _H^L :G_H^L  \to G_H^R $ is then also bijective.  Choose an (injective) map $\vartheta :G_H^R  \to RT(H)$ such that $r_{\vartheta (g)}  = g$ for all $g \in G_H^R $.  Then for any $m \in LT(H)$ define $\bar m \in RT(H)$ by $\bar m = \vartheta  \circ \psi \left( {l_m } \right)$, so that $m\gamma _D  = \gamma _D \bar m$.  For any $m \in LT(H)\,,\,n \in RT(H)$ we have $m \cdot \phi _H^R \left( {r_n } \right) = m \cdot \gamma _D n = m\gamma _D  \cdot n = \gamma _D \bar m \cdot n = \phi _H^R \left( {r_{\bar m} r_n } \right)$.  Then for any $x \in R\left[ {G_H^R } \right]\,,\,m \in LT(H)$
 we have by linearity $m\phi _H^R \left( x \right) = \phi _H^R \left( {r_{\bar m} x} \right)$.
  
\begin{lemma} \label{l2.1}
  Take any $m \in M$.
\begin{itemize}
\item [i.]  If there exists an $\,h \in H$ such that $hm \in H$ then $m \in RT(H)$ and $r_m $ gives a bijective map of $H$ onto $H$.
\item [ii.] If there exists an $\,h \in H$ such that $mh \in H$then $m \in LT(H)$ and $l_m $ gives a bijective map of $H$ onto $H$.
\end{itemize}
\end{lemma}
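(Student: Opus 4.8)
The plan is to prove (i) directly from Green's lemma as recalled above, and to obtain (ii) by the left--right dual argument. The entire content of (i) can be packaged into a single application of Green's lemma to the $\mathcal{R}$-related pair $h$ and $hm$; once the resulting map is shown to carry $L_h$ onto itself, restricting it to $H$ yields simultaneously that $m \in RT(H)$ and that $r_m$ is bijective.

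First I would record what the hypothesis $hm \in H$ gives. Since $H = H_h$ and $hm \in H$, we have both $hm\,\mathcal{R}\,h$ and $hm\,\mathcal{L}\,h$. From $hm\,\mathcal{R}\,h$ we get $hM = (hm)M$, so that $h \in (hm)M$; as $M$ is a monoid this produces $t \in M$ with $(hm)t = h$. Together with the obvious relation $h \cdot m = hm$, this puts us exactly in the situation of Green's lemma with $a = h$, $b = hm$, $s = m$: the map $x \mapsto xm$ is a bijective, $\mathcal{R}$-class preserving map of $L_h$ onto $L_{hm}$.

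Next I would invoke the remaining relation $hm\,\mathcal{L}\,h$, which says $L_{hm} = L_h$. Hence $\sigma : x \mapsto xm$ is a bijection of $L_h$ onto itself that preserves $\mathcal{R}$-classes. For $x \in H = R_h \cap L_h$ we then have $\sigma(x) \in L_h$ and $\sigma(x)\,\mathcal{R}\,x\,\mathcal{R}\,h$, so $\sigma(x) \in H$; conversely, any $y \in H \subseteq L_h$ equals $\sigma(x)$ for a unique $x \in L_h$, and $x\,\mathcal{R}\,\sigma(x) = y\,\mathcal{R}\,h$ forces $x \in H$. Thus $\sigma$ restricts to a bijection $H \to H$. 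In particular $Hm \subseteq H$, so $m \in RT(H)$, and this restriction is precisely $r_m$, which is therefore bijective, proving (i).

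The one step deserving care --- and the only real obstacle --- is the verification that the $\mathcal{R}$-class preserving bijection $\sigma$ of $L_h$ genuinely restricts to a bijection of the smaller set $H$; this is exactly where both halves of the hypothesis are consumed, the $\mathcal{R}$-relation to invoke Green's lemma and the $\mathcal{L}$-relation to force the target $\mathcal{L}$-class to coincide with $L_h$. Part (ii) is the exact left--right dual: the hypothesis $mh \in H$ yields $mh\,\mathcal{L}\,h$ and $mh\,\mathcal{R}\,h$, one applies the dual form of Green's lemma to the $\mathcal{L}$-related pair $h$ and $mh$ to get an $\mathcal{L}$-class preserving bijection $x \mapsto mx$ of $R_h$ onto $R_{mh} = R_h$, and restricts it to $H$ to conclude $m \in LT(H)$ with $l_m$ bijective. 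Note that no finiteness of $M$ is needed here, since Green's lemma already supplies honest bijections.
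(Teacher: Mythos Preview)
Your proof is correct and takes a different route from the paper's. The paper argues via the parametrisation of $H$ by left translations: writing $h = m_1\gamma_D$ and $hm = m_2\gamma_D$ with $m_1,m_2 \in LT(H)$, it picks $\bar m_1$ with $l_{\bar m_1} = (l_{m_1})^{-1}$, so $\gamma_D = \bar m_1 h$, and then for an arbitrary $n\gamma_D \in H$ computes $n\gamma_D \cdot m = (n\bar m_1 m_2)\gamma_D \in H$ since $n\bar m_1 m_2 \in LT(H)$. This shows $Hm \subseteq H$, i.e.\ $m \in RT(H)$, and bijectivity is then inherited from the standing fact that every $r_m$ with $m \in RT(H)$ is bijective. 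Your argument instead applies Green's lemma (exactly as recalled in the paper) to the $\mathcal{R}$-related pair $h, hm$ to get the $\mathcal{R}$-class preserving bijection $L_h \to L_{hm}$, identifies $L_{hm}$ with $L_h$ via the $\mathcal{L}$-relation, and restricts to $H$. What your approach buys is that it is self-contained, needs neither the base point $\gamma_D$ nor the group $G_H^L$, and yields $m \in RT(H)$ and the bijectivity of $r_m$ in one stroke; what the paper's approach buys is that it exercises precisely the $LT(H)$/$\phi_H^L$ machinery that is used throughout the rest of the section, so it doubles as a warm-up computation in that framework.
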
  

\begin{proof}

For i., write $h = m_1  \cdot \gamma _D \,,\,hm = m_2  \cdot \gamma _D $ for some $m_1 ,m_2  \in LT(H)$.  Then $l_{m_1 }  \in G_H^L $ and we can choose $\bar m_1  \in LT(H)$
 such that $l_{\bar m_1 }  = \left( {l_{m_1 } } \right)^{ - 1} $.  Then $\gamma _D  = \bar m_1  \cdot h$.  Any element of $H$ has the form $n \cdot \gamma _D $ for some $n \in LT(H)$, and we have $n\gamma _D  \cdot m = n \cdot \left( {\bar m_1 h} \right) \cdot m = n\bar m_1 (hm) = n\bar m_1 (m_2 \gamma _D ) = \left( {n\bar m_1 m_2 } \right) \cdot \gamma _D $.  But $n\bar m_1 m_2  \in LT(H)$ (since $n,\bar m_1 ,m_2 $ are), so $n\gamma _D  \cdot m = \left( {n\bar m_1 m_2 } \right) \cdot \gamma _D  \in H$.  Then $m \in RT(H)$ as desired.  The proof of ii. is parallel.  
\end{proof}

We now investigate general products $am\,,\,ma$ where $a \in D \in \mathbb{D}$ and $m \in M$.  As mentioned above, we have $D_{am}  \leqslant D_a $ and $D_{ma}  \leqslant D_a $.  We need to study the cases when $D_{am}  = D_a $  or $D_{ma}  = D_a $.  The first tool is the following result (see e.g chapter 5 of \cite{Gr}):

\begin{lemma}  \label{l2.2}
  For any finite monoid $M$ and $a,m \in M$, 
\begin{itemize}
\item [i] If $D_a  = D_{am} $, then $R_a  = R_{am} $.
\item [ii]  If $D_a  = D_{ma} $, then $L_a  = L_{ma} $.
\end{itemize}
\end{lemma}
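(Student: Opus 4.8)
The two statements are dual under interchanging left and right multiplication (and the ideals $aM$ and $Ma$), so the plan is to prove i in full and then obtain ii by the symmetric argument. Throughout I would use the fact, recorded in the excerpt, that $\mathcal{D} = \mathcal{J}$ in a finite monoid.

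First I would dispose of the trivial half. Since $am \in aM$ we have $amM \subseteq aM$ automatically, so the content of i is the reverse inclusion $aM \subseteq amM$; and because $amM \subseteq aM$ holds, this reverse inclusion is equivalent to the single membership $a \in amM$. So the whole problem reduces to producing an element $w \in M$ with $a = amw$.

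To get such a $w$, I would exploit the hypothesis together with $\mathcal{D} = \mathcal{J}$: from $D_a = D_{am}$ we get $MaM = MamM$, hence $a \in MamM$, so $a = u\,(am)\,v$ for some $u,v \in M$. Writing $c = mv$, this reads $a = uac$, a fixed-point relation that I would iterate to obtain $a = u^n a c^n$ for every $n \geqslant 1$. Now I invoke finiteness of $M$: some positive power $e = u^k$ is idempotent. Taking $n = k$ gives $a = u^k a c^k = eac^k$, and left-multiplying by $e$ while using $e^2 = e$ yields $ea = e(eac^k) = eac^k = a$, that is, $ea = a$. Substituting this back into $a = eac^k$ gives $a = (ea)c^k = ac^k$. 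Finally I would extract the factor $am$: since $c = mv$, we have $c^k = (mv)^k = m\bigl(v(mv)^{k-1}\bigr)$, so $a = ac^k = am\cdot v(mv)^{k-1} \in amM$, which is exactly the required membership.

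The routine parts here are the reduction to $a \in amM$ and the factoring in the last line. The step that actually carries an idea, and which I expect to be the main obstacle, is the middle one: recognizing that the fixed-point relation $a = uac$ should be iterated and combined with the existence of an idempotent power $u^k$, so that the idempotent can be cancelled on the left (via $ea = a$) to collapse $a = eac^k$ down to $a = ac^k$. Once that identity is in hand, part i --- and, dually, part ii --- follows immediately.
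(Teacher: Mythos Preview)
Your proof is correct and follows essentially the same route as the paper: both reduce to showing $a \in amM$, write $a = u(am)v$ via $\mathcal{D}=\mathcal{J}$, iterate to $a = u^k a (mv)^k$, pick $k$ so that $u^k$ is idempotent, deduce $u^k a = a$, and then factor $(mv)^k$ as $m\cdot v(mv)^{k-1}$. The only differences are notational.
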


\begin{proof}
 We use the facts (again, see chapter 5 of \cite{Gr}) that in any finite monoid $\mathcal{D} = \mathcal{J}$ and that for any element $x$ in a finite monoid some power $x^n $ is idempotent.
For i., assume $D_a  = D_{am} $ for some $a,m \in M$.  To prove $R_a  = R_{am} $, it suffices to check that $a \in amM$ and $am \in aM$.  Since $am \in aM$ is obvious, we need show only $a \in amM$.  Now $D_a  = D_{am}  \Rightarrow J_a  = J_{am}  \Rightarrow a \in MamM \Rightarrow a = x(am)y = xa(my)$ for some $x,y \in M$.  Then by repeated substitutions we have $a = x^k a(my)^k $ for any positive integer $k$.  Choose $k=n$ so that $x^n $ is an idempotent $e$.  Then $a = ea(my)^n $ and $ea = e \cdot ea(my)^n  = ea(my)^n  = a$.  But then $a = a(my)^n  = am \cdot y(my)^{n - 1}  \in amM$ as desired.  The proof of ii. is similar. 
\end{proof}

We also use the ``egg-box'' picture of a given fixed class $D \in \mathbb{D}$:

Picture the $\mathcal{H}$-classes contained in $D$ in a rectangular array where the $\mathcal{H}$-classes in a given column are all in the same $\mathcal{L}$-class and the $\mathcal{H}$-classes in a given row are all in the same $\mathcal{R}$-class.  So the number of columns in the array is the number $n(D,L)$ of distinct $\mathcal{L}$-classes in $D$, while the number of rows is the number $n(D,R)$ of distinct $\mathcal{R}$-classes in $D$.  Write ${}_iH_j $ for the $\mathcal{H}$-class in row $i$, column $j$.  We can assume that ${}_1H_1 $ is our ``base class'' $H_\gamma  $.

The following result follows from Green's lemma and its dual:

\begin{lemma} \label{l2.3}
  For each $i \in \left\{ {1,2, \cdots ,n(D,R)} \right\}\,,\,j \in \left\{ {1,2, \cdots ,n(D,L)} \right\}$ there exist elements $a_i ,\bar a_i ,b_j ,\bar b_j  \in M$ such that
\begin{itemize}
\item [i.]  For each column $j$, the left translation $h \mapsto a_i h$ defines a bijective map $l_i :{}_1H_j  \to {}_iH_j $ with inverse $l_i ^{ - 1} :{}_iH_j  \to {}_1H_j $ defined by left translation $l_i ^{ - 1} :h \mapsto \bar a_i h$.
\item [ii.]  For each row $i$, the right translation $h \mapsto hb_j $ defines a bijective map $r_j :{}_iH_1  \to {}_iH_j $ with inverse $r_j ^{ - 1} :{}_iH_j  \to {}_iH_1 $ defined by right translation $r_j ^{ - 1} :h \mapsto h\bar b_j $.
\end{itemize}
\end{lemma}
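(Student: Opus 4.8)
The plan is to recognize this lemma as a direct repackaging of Green's lemma and its dual, recorded in the egg-box indexing. Throughout I would write $\gamma$ for the base element, so that ${}_1H_1 = H_\gamma$, and I would let $R_i$ denote the common $\mathcal{R}$-class of the $i$th row and $L_j$ the common $\mathcal{L}$-class of the $j$th column. The one structural fact I would lean on is that each $\mathcal{H}$-class is the intersection of its row and column classes, ${}_iH_j = R_i \cap L_j$ (immediate from the definition $x\mathcal{H}y \Leftrightarrow x\mathcal{L}y$ and $x\mathcal{R}y$); in particular $R_1 = R_\gamma$ and $L_1 = L_\gamma$.

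For part ii I would fix a column $j$ and choose any $c_j \in {}_1H_j$. Since ${}_1H_j$ and ${}_1H_1$ lie in the same row, $\gamma \mathcal{R} c_j$, so $\gamma M = c_j M$ and there exist $b_j, \bar b_j \in M$ with $\gamma b_j = c_j$ and $c_j \bar b_j = \gamma$. Green's lemma, applied to the pair $\gamma \mathcal{R} c_j$ with these $b_j, \bar b_j$, then gives that $x \mapsto x b_j$ is an $\mathcal{R}$-class preserving bijection of $L_\gamma = L_1$ onto $L_{c_j} = L_j$, with inverse $y \mapsto y \bar b_j$. Because this bijection preserves $\mathcal{R}$-classes, it carries $R_i \cap L_1 = {}_iH_1$ onto $R_i \cap L_j = {}_iH_j$ for every row $i$ at once, which is exactly assertion ii. I would stress that a single $b_j$ serves all rows $i$ precisely because the map acts on the whole $\mathcal{L}$-class $L_1$ simultaneously.

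Part i is the mirror image, using the dual of Green's lemma quoted in the text. For a fixed row $i$ I would choose $d_i \in {}_iH_1$; since ${}_iH_1$ and ${}_1H_1$ lie in the same column, $\gamma \mathcal{L} d_i$, so $M\gamma = M d_i$ and there exist $a_i, \bar a_i \in M$ with $a_i \gamma = d_i$ and $\bar a_i d_i = \gamma$. The dual result then shows $x \mapsto a_i x$ is an $\mathcal{L}$-class preserving bijection of $R_\gamma = R_1$ onto $R_{d_i} = R_i$ with inverse $y \mapsto \bar a_i y$. Preservation of $\mathcal{L}$-classes forces ${}_1H_j = R_1 \cap L_j$ onto $R_i \cap L_j = {}_iH_j$ for every column $j$, giving assertion i; again the single element $a_i$ works for all $j$ because the map moves the entire $\mathcal{R}$-class $R_1$ at once.

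I do not expect any genuine obstacle here: the content of the lemma is entirely supplied by Green's lemma and its dual, and the only point needing care is the bookkeeping step, namely that an $\mathcal{R}$-class preserving (respectively $\mathcal{L}$-class preserving) bijection between two columns (respectively rows) must restrict to bijections between the individual $\mathcal{H}$-classes sharing a common row (respectively column). This is what justifies the indices: $a_i$ depends only on the row $i$ and $b_j$ only on the column $j$, exactly because each translation is performed on a full $\mathcal{R}$- or $\mathcal{L}$-class. Writing ${}_iH_j = R_i \cap L_j$ makes this restriction immediate.
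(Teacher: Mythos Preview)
Your proposal is correct and is precisely the argument the paper has in mind: the paper offers no proof beyond the remark that the lemma ``follows from Green's lemma and its dual,'' and you have spelled out exactly that deduction, including the key observation that the $\mathcal{R}$- (respectively $\mathcal{L}$-) class preservation in Green's lemma is what lets a single $b_j$ (respectively $a_i$) serve all rows (respectively columns) at once.
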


Then for any row $i$, column $j$, and $h \in H_\gamma  $,  $h \mapsto a_i hb_j $ gives a bijective map ${}_i\psi _j :H_\gamma   = {}_1H_1  \to {}_iH_j $ of $\mathcal{H}$-classes which extends linearly to a bijection ${}_i\psi _j :R[H_\gamma  ] \to R[{}_iH_j ]$.

 We can now state the main result of this section.

\begin{proposition}  \label{p2.1}
  Suppose that for some $d \in D,m \in M$ we have $dm \in D$.   For some $i$,$j$, this $d \in {}_iH_j  = {}_i\psi _j \left( {H_\gamma  } \right) = a_i H_\gamma  b_j $ and so $d = a_i \phi _H^R (g_d )b_j $ for some $g_d  \in G_H^R $.  Then:

\begin{itemize}
\item [i.]  $dm \in {}_iH_k $ for some $k$.  Define $m^ *   = b_j m\bar b_k $.
\item [ii.]  $m^*  \equiv b_j m\bar b_k  \in RT\left( H \right)$.
\item [iii.]  The right translation $r_m :h \mapsto hm$ gives a bijection $r_m :{}_iH_j  \to {}_iH_k $.
\item [iv.]  If $h = a_i \phi _H^R (g)b_j  \in {}_iH_j $, then $hm = (h)r_m = a_i \phi _H^R \left( {g\,r_{m^ *  } } \right)b_k $.
\item [v.]  For any $x \in R\left[ {{}_iH_j } \right]$, write $x = a_i \phi _H^R \left( y \right)b_j $ for some $y \in R\left[ {G_H^R } \right]$.  Then $xm = a_i \phi _H^R \left( {y\,r_{m^ *  } } \right)b_k $.
\end{itemize}
\end{proposition}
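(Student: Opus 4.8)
The plan is to derive the five assertions essentially in the order (i), (iii), (ii), (iv), (v), since part (iii) is the structural heart and the rest feed off it. For (i), the hypothesis $dm\in D$ together with $D_{dm}\leqslant D_d$ forces $D_d=D_{dm}$, so Lemma~\ref{l2.2}(i) gives $R_d=R_{dm}$. Thus $dm$ lies in the same $\mathcal{R}$-class (the same row $i$) as $d$, and since $dm\in D$ it lies in ${}_iH_k$ for a unique column $k$.

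For (iii) I would invoke Green's lemma with $a=d$, $b=dm$, $s=m$. Since $d\,\mathcal{R}\,dm$ we have $dM=dmM$, so $d=dm\,t$ for some $t\in M$; hence both $as=b$ and $bt=a$ hold. Green's lemma then asserts that $x\mapsto xm$ is a bijective, $\mathcal{R}$-class preserving map of $L_d$ onto $L_{dm}$. Here $L_d$ is column $j$ and $L_{dm}$ is column $k$, and ``$\mathcal{R}$-class preserving'' means rows are preserved; restricting to row $i$ gives the claimed bijection $r_m:{}_iH_j\to{}_iH_k$. The same statement applied to row $1$ yields a bijection $r_m:{}_1H_j\to{}_1H_k$, which I use in the next two steps.

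For (ii) I would apply Lemma~\ref{l2.1}(i), for which it suffices to produce one $h\in H$ with $h\,m^*\in H$. Starting from $h\in H={}_1H_1$, Lemma~\ref{l2.3}(ii) gives $hb_j\in{}_1H_j$; the row-$1$ case of (iii) gives $(hb_j)m\in{}_1H_k$; and the inverse right translation of Lemma~\ref{l2.3}(ii) gives $((hb_j)m)\bar b_k\in{}_1H_1=H$. Since this composite is exactly $h(b_jm\bar b_k)=h\,m^*$, Lemma~\ref{l2.1}(i) yields $m^*\in RT(H)$, so $r_{m^*}\in G_H^R$ is defined.

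For (iv), writing $hm=a_i\,\phi_H^R(g)\,b_j\,m$, the key observation is that $\phi_H^R(g)b_j m\in{}_1H_k$ (again by the row-$1$ case of (iii)), so right multiplication by $\bar b_k$ followed by $b_k$ acts as the identity on it, because $h\mapsto h\bar b_k$ and $h\mapsto hb_k$ are mutually inverse bijections on ${}_1H_k$ by Lemma~\ref{l2.3}(ii). Inserting $\bar b_kb_k$ and regrouping gives $hm=a_i\big(\phi_H^R(g)\,b_jm\bar b_k\big)b_k=a_i\big(\phi_H^R(g)\,m^*\big)b_k$, and the relation $\phi_H^R(x)\cdot n=\phi_H^R(xr_n)$ for $n=m^*\in RT(H)$ converts this to $a_i\,\phi_H^R(g\,r_{m^*})\,b_k$. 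Part (v) is then the $R$-linear extension of (iv): every $x\in R[{}_iH_j]$ is $a_i\phi_H^R(y)b_j$ for a unique $y\in R[G_H^R]$, and the two identities used in (iv)—that right multiplication by $\bar b_kb_k$ fixes $R[{}_1H_k]$, and the linear form of $\phi_H^R(y)\cdot m^*=\phi_H^R(yr_{m^*})$—hold verbatim at the module level. The only genuine work is the bookkeeping of which row and column each partial product occupies; the real content sits in the Green's lemma step for (iii), and once $b_j,\bar b_k$ are recognized as inverse bijections on the correct $\mathcal{H}$-classes, the cancellations in (ii), (iv) and (v) are forced.
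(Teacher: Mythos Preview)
Your proof is correct and complete. The principal difference from the paper's argument is the order in which (ii), (iii), (iv) are established. The paper first proves (ii) directly from the single witness $d$: writing $dm=a_i\phi_H^R(g_{dm})b_k$ and cancelling $a_i$ and $b_k$ via $\bar a_i,\bar b_k$ exhibits $\phi_H^R(g_d)\cdot m^*\in H$, so Lemma~\ref{l2.1} applies. It then computes (iv) by evaluating $hm\bar b_k$ and multiplying back by $b_k$, and only afterwards reads off (iii) by recognising $r_m$ as the composite ${}_i\psi_k\circ\phi_H^R\circ(g\mapsto gr_{m^*})\circ(\phi_H^R)^{-1}\circ({}_i\psi_j)^{-1}$ of known bijections. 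You instead appeal to Green's lemma at the outset to obtain (iii) for every row of the egg-box at once, and then use the row-$1$ instance both to prove (ii) and to justify the insertion of $\bar b_k b_k$ in (iv). Your ordering has the virtue that the step $(hm\bar b_k)b_k=hm$ (which the paper leaves implicit in ``multiplying on the right by $b_k$'') is made transparent, since you already know $\phi_H^R(g)b_jm\in{}_1H_k$; the paper's ordering, on the other hand, shows that (iii) is a formal consequence of (ii) and (iv) without a separate appeal to Green's lemma.
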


\begin{proof}
  Since $D_d  = D_{dm}  = D$, we have $R_d  = R_{dm} $ by lemma 2.2.  So $H_d {\text{ and }}H_{dm} $ lie in the same $\mathcal{R}$-class and are in the same row of the ``egg-box'' for $D$.  So if $d \in {}_iH_j $, then $dm \in {}_iH_k $ for some $k$, proving i.
Write $d = a_i \phi _H^R (g_d )b_j $ and $dm = a_i \phi _H^R (g_{dm} )b_k $ for some $g_d ,g_{dm}  \in G_H^R $.  Then $dm = a_i \phi _H^R (g_d )b_j m = a_i \phi _H^R (g_{dm} )b_k $.  Multiplying by $\bar a_i $ on the left and by $\bar b_k $ on the left gives 
$\phi _H^R (g_d )b_j m\bar b_k  = \phi _H^R (g_{dm} ) \in H$.  So by lemma 2.1, $m^*  \equiv b_j m\bar b_k  \in RT\left( H \right)$ proving ii.  Then $r_{m^ *  } :H \to H$ is a bijection $\phi _H^R \left( g \right)m^ *   = \phi _H^R \left( {gr_{m^ *  } } \right)$ for any $g \in G_H^R $.
Now take any $h = a_i \phi _H^R (g)b_j  \in {}_iH_j $  and compute $hm\bar b_k  = a_i \phi _H^R (g)b_j m\bar b_k  = a_i \phi _H^R (g)m^ *   = a_i \phi _H^R (gr_{m^ *  } )$ Multiplying on the right by $b_k $ then proves iv., and v. then follows by linearity.
For iii., observe that $r_m $ can be written as a composition of bijections: 
$r_m  = {}_i\psi _k  \circ \phi _H^R  \circ \bar r_{m^ *  }  \circ \left( {\phi _H^R } \right)^{ - 1}  \circ \left( {{}_i\psi _j } \right)^{ - 1} $
where $\bar r_{m^ *  } :G_H^R  \to G_H^R $ is the bijection $g \mapsto gr_{m^ *  } $.  
\end{proof}

As an immediate corollary we have

\begin{corollary}  \label{c2.1}
  For any $\mathcal{H}$-class ${}_iH_j  \subseteq D$ and any $m \in M$
either
\begin{itemize}
\item [i.]  $R\left[ {{}_iH_j } \right] \cdot m \subseteq \hat A^D  = \mathop  \oplus \nolimits_{D' < D} R\left[ {D'} \right]$ or
\item  [ii.]  $R\left[ {{}_iH_j } \right] \cdot m = R\left[ {{}_iH_k } \right]$ for some $k$ and if $x = a_i \phi _H^R \left( y \right)b_j $ for some $y \in R\left[ {G_H^R } \right]$, then $xm = a_i \phi _H^R \left( {y\,r_{m^ *  } } \right)b_k $, where $m^ *   = b_j m\bar b_k  \in RT(H)$.
\end{itemize}
\end{corollary}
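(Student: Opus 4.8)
The plan is to reduce everything to the dichotomy already contained in Proposition~\ref{p2.1} by a single case split on whether the $\mathcal{D}$-class of the products $hm$ drops below $D$ or not. Since $D_{hm} \leqslant D_h = D$ for every $h \in {}_iH_j$, each product $hm$ either lies in $D$ (the case $D_{hm} = D$) or lies in a strictly smaller $\mathcal{D}$-class (the case $D_{hm} < D$). The two alternatives of the corollary should correspond exactly to these two possibilities, once I verify that the behaviour is uniform across the whole $\mathcal{H}$-class.

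First I would treat the case where no element of ${}_iH_j$ maps back into $D$: if $hm \notin D$ for every $h \in {}_iH_j$, then $D_{hm} < D$, so each basis element satisfies $hm \in \bigoplus_{D' < D} R[D'] = \hat{A}^D$; extending by linearity gives $R[{}_iH_j]\cdot m \subseteq \hat{A}^D$, which is alternative i.

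Otherwise there is at least one $d \in {}_iH_j$ with $dm \in D$, and here Proposition~\ref{p2.1} applies directly with this $d$. That proposition supplies the index $k$ with $dm \in {}_iH_k$, the element $m^* = b_j m \bar{b}_k \in RT(H)$, and, crucially, part iii, which asserts that $r_m$ is a bijection ${}_iH_j \to {}_iH_k$. This bijectivity upgrades the existence of a single element landing in $D$ to the statement that the whole $\mathcal{H}$-class maps onto ${}_iH_k$, so that $R[{}_iH_j]\cdot m = R[{}_iH_k]$; the explicit formula $xm = a_i \phi_H^R(y\,r_{m^*})b_k$ for $x = a_i \phi_H^R(y)b_j$ is then precisely part v of the proposition restated. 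This is alternative ii.

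The only point that needs care, and the potential obstacle, is confirming that the two alternatives are genuinely exhaustive, i.e.\ that the dichotomy is uniform over the $\mathcal{H}$-class rather than varying element by element. This is exactly where part iii of Proposition~\ref{p2.1} does the work: because the existence of one $d$ with $dm \in D$ forces $r_m$ to be a bijection onto ${}_iH_k$, it follows that \emph{every} $h \in {}_iH_j$ satisfies $hm \in {}_iH_k \subseteq D$, so the mixed scenario (some products in $D$, others in a lower class) cannot occur. Thus the case split on a single element determines the behaviour of the entire $\mathcal{H}$-class, and the corollary follows immediately from Proposition~\ref{p2.1} together with Lemma~\ref{l2.2}.
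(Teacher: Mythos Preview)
Your proposal is correct and matches the paper's approach: the paper records Corollary~\ref{c2.1} as an immediate consequence of Proposition~\ref{p2.1} without further argument, and what you have written is precisely the unpacking of that implication via the case split on whether some $d\in{}_iH_j$ satisfies $dm\in D$, with part~iii of the proposition ensuring uniformity across the $\mathcal{H}$-class. The reference to Lemma~\ref{l2.2} at the end is superfluous (its content is already absorbed into Proposition~\ref{p2.1}), but this does no harm.
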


We obtain the dual versions of proposition \ref{p2.1} and corollary \ref{c2.1} given next by using part ii. of lemmas \ref{l2.1} and \ref{l2.2} along with the formula $m\phi _H^R (g) = \phi _H^R \left( {r_{\bar m} g} \right)$ for $g \in G_H^R $, where $\bar m = \vartheta  \circ \psi \left( {l_m } \right) \in RT(H)$.

\begin{proposition}  \label{p2.2}
  Suppose that for some $d \in D,m \in M$ we have $md \in D$.   For some $i$,$j$, this $d \in {}_iH_j  = {}_i\psi _j \left( {H_\gamma  } \right) = a_i H_\gamma  b_j $ and so $d = a_i \phi _H^R (g_d )b_j $ for some $g_d  \in G_H^R $.  Then:
\begin{itemize}
\item [i.]  $md \in {}_kH_j $ for some $k$.  Define $m^ *   = \bar a_k ma_j $.
\item [ii.]  $m^*  \equiv \bar a_k ma_j  \in LT\left( H \right)$, so $\bar {m^ *  }  = \vartheta  \circ \psi \left( {l_{m^ *  } } \right) \in RT(H)$.
\item [iii.]  The left translation $l_m :h \mapsto mh$ gives a bijection $l_m :{}_iH_j  \to {}_kH_j $.
\item [iv.]  If $h = a_i \phi _H^R (g)b_j  \in {}_iH_j $, then $mh = l_m (h) = a_k \phi _H^R \left( {r_{\bar {m^ *  } } \,g} \right)b_j $.
\item [v.]  For any $x \in R\left[ {{}_iH_j } \right]$, write $x = a_i \phi _H^R \left( y \right)b_j $ for some $y \in R\left[ {G_H^R } \right]$.  Then $mx = a_k \phi _H^R \left( {r_{\bar {m^ *  } } \,\,y} \right)b_j $.
\end{itemize}
\end{proposition}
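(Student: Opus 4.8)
The plan is to run the proof of Proposition \ref{p2.1} under the left--right, row--column duality, replacing part i of Lemmas \ref{l2.1} and \ref{l2.2} by their part ii and using the left-translation identity $m\,\phi_H^R(x) = \phi_H^R(r_{\bar m}x)$ (valid for $m \in LT(H)$, $x \in R[G_H^R]$) in place of the right-translation identity $\phi_H^R(x)\cdot n = \phi_H^R(x\,r_n)$.

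First I would establish part i. Since $d \in D$ and $md \in D$, we have $D_d = D_{md} = D$, so Lemma \ref{l2.2} ii gives $L_d = L_{md}$; thus $H_d$ and $H_{md}$ lie in the same $\mathcal{L}$-class, i.e.\ the same column of the egg-box. As $d \in {}_iH_j$ this forces $md \in {}_kH_j$ for some row $k$, the column index $j$ being preserved (dual to the row being preserved in Proposition \ref{p2.1}).

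For part ii I would put both elements in normal form, $d = a_i\phi_H^R(g_d)b_j$ and $md = a_k\phi_H^R(g_{md})b_j$, and isolate $m$. Substituting the first expression into $md$ and cancelling on the right by $\bar b_j$ and on the left by $\bar a_k$ collapses the identity to $\bar a_k\,m\,a_i\,\phi_H^R(g_d) = \phi_H^R(g_{md}) \in H$; the cancellations are legitimate because the intermediate product $m\,a_i\phi_H^R(g_d)$ already lies in ${}_kH_1$. Since $\phi_H^R(g_d) \in H$ as well, Lemma \ref{l2.1} ii applies and yields $m^* = \bar a_k\,m\,a_i \in LT(H)$, whence $\bar{m^*} = \vartheta\circ\psi(l_{m^*}) \in RT(H)$ from the construction preceding the proposition. (The defining expression for $m^*$ should read $\bar a_k m a_i$, with the row-$i$ translation $a_i$ attached to the source class ${}_iH_j$; this is the exact dual of the $b_j$ appearing in Proposition \ref{p2.1}.)

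Parts iv and v then follow by a direct computation dual to the one in Proposition \ref{p2.1}: for $h = a_i\phi_H^R(g)b_j$ one forms $\bar a_k\,m\,h = (\bar a_k m a_i)\phi_H^R(g)b_j = m^*\phi_H^R(g)b_j$, applies the left-translation identity to get $m^*\phi_H^R(g) = \phi_H^R(r_{\bar{m^*}}\,g)$, and multiplies back by $a_k$ on the left to recover $mh = a_k\phi_H^R(r_{\bar{m^*}}\,g)b_j$; v is then immediate by linearity. Part iii records that $l_m$ is the composite of bijections ${}_k\psi_j\circ\phi_H^R\circ\bar r_{\bar{m^*}}\circ(\phi_H^R)^{-1}\circ({}_i\psi_j)^{-1}$, where now $\bar r_{\bar{m^*}}\colon G_H^R\to G_H^R$ is \emph{left} multiplication $g\mapsto r_{\bar{m^*}}\,g$ by the group element $r_{\bar{m^*}}$, hence a bijection. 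The one genuine obstacle is the side bookkeeping: because $G_H^R$ is the \emph{right} Schützenberger group, a left translation by $m$ must be carried through $\psi$ and the section $\vartheta$ to the element $\bar{m^*}\in RT(H)$, which then acts on $R[G_H^R]$ by left multiplication. Keeping this left/right bridge consistent with the row/column swap, rather than the underlying algebra, is where an error would most likely arise.
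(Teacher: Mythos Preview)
Your proposal is correct and matches the paper's own approach exactly: the paper does not give a separate proof but simply states that Proposition~\ref{p2.2} follows by dualizing the proof of Proposition~\ref{p2.1} using parts~ii of Lemmas~\ref{l2.1} and~\ref{l2.2} together with the left-translation identity $m\,\phi_H^R(x)=\phi_H^R(r_{\bar m}x)$, which is precisely what you carry out. You are also right that the statement contains a typo: the definition of $m^*$ should read $\bar a_k m a_i$ (the row index of the source class ${}_iH_j$), not $\bar a_k m a_j$; the index $j$ is a column index, for which $a_j$ is not even defined, and your corrected version is the true dual of $m^*=b_j m\bar b_k$ from Proposition~\ref{p2.1}.
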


\begin{corollary}  \label{c2.2}
  For any $\mathcal{H}$-class ${}_iH_j  \subseteq D$ and any $m \in M$
either
\begin{itemize}
\item [i.]  $m \cdot R\left[ {{}_iH_j } \right] \subseteq \hat A^D  = \mathop  \oplus \nolimits_{D' < D} R\left[ {D'} \right]$ or
\item [ii.]  $m \cdot R\left[ {{}_iH_j } \right] = R\left[ {{}_kH_j } \right]$ for some $k$ and if $x = a_i \phi _H^R \left( y \right)b_j $ for some $y \in R\left[ {G_H^R } \right]$, then $mx = a_k \phi _H^R \left( {r_{\bar {m^ *  } } \,\,y} \right)b_j $, where $m^*  \equiv \bar a_k ma_j  \in LT\left( H \right)$ and $\bar {m^ *  }  = \vartheta  \circ \psi \left( {l_{m^ *  } } \right) \in RT(H)$.
\end{itemize}
\end{corollary}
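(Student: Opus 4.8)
The plan is to read off corollary \ref{c2.2} from proposition \ref{p2.2} exactly as corollary \ref{c2.1} is read off from proposition \ref{p2.1}, using the left--right duality already set up in the text. Proposition \ref{p2.2} does all the analytic work: it identifies the target $\mathcal{H}$-class, verifies that the reduced multiplier $m^*$ lies in $LT(H)$, and supplies the explicit image formula. What remains for the corollary is only to show that the stated alternative is exhaustive and mutually exclusive. To organize this I would fix one reference element $d_0 \in {}_iH_j$ and split on whether $md_0 \in D$. Since $D_{md_0} \leqslant D_{d_0} = D$ always holds, the two possibilities are $md_0 \in D$ and $D_{md_0} < D$.

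In the case $md_0 \in D$, proposition \ref{p2.2}iii.\ applies and the left translation $l_m : h \mapsto mh$ is a bijection of ${}_iH_j$ onto ${}_kH_j$ for the appropriate row index $k$. Hence every basis element $d \in {}_iH_j$ satisfies $md \in {}_kH_j \subseteq D$, and since $l_m$ permutes these basis elements it induces an isomorphism of free $R$-modules carrying $R[{}_iH_j]$ onto $R[{}_kH_j]$. The identity $mx = a_k \phi_H^R(r_{\bar{m^*}}\,y)b_j$ for $x = a_i \phi_H^R(y)b_j$ is then just proposition \ref{p2.2}v.\ quoted verbatim; this is alternative ii.

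In the complementary case $D_{md_0} < D$ I would argue that alternative i.\ must hold. The point requiring care is that whether a left product $md$ remains in $D$ is uniform across the whole $\mathcal{H}$-class ${}_iH_j$: if some $d_1 \in {}_iH_j$ had $md_1 \in D$, then the previous paragraph applied to $d_1$ would make $l_m$ a bijection of all of ${}_iH_j$ into $D$, forcing $md_0 \in D$ and contradicting $D_{md_0} < D$. Therefore $D_{md} < D$ for every $d \in {}_iH_j$, so each basis product $md$ lies in some $R[D']$ with $D' < D$, whence $m \cdot R[{}_iH_j] \subseteq \hat{A}^D$.

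I expect this uniformity step to be the only substantive point; it is exactly where proposition \ref{p2.2}iii.\ is indispensable, since a single surviving product drags the entire $\mathcal{H}$-class back into $D$ through a bijection and thereby rules out any intermediate behaviour. The remaining bookkeeping is immediate: alternatives i.\ and ii.\ are mutually exclusive because $R[{}_kH_j]$ is a nonzero submodule of the summand $R[D]$ while $\hat{A}^D = \bigoplus_{D' < D} R[D']$ meets $R[D]$ trivially, and the passage from the elementwise statement $md \in {}_kH_j$ to the module equality $m \cdot R[{}_iH_j] = R[{}_kH_j]$ is just the observation that a bijection of bases extends to an isomorphism of the free modules they span.
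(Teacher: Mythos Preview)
Your proposal is correct and follows essentially the same approach as the paper: the paper treats Corollary~\ref{c2.2} as the immediate dual of Corollary~\ref{c2.1}, read off from Proposition~\ref{p2.2} in the same way Corollary~\ref{c2.1} was read off from Proposition~\ref{p2.1}, and your argument does exactly this while spelling out the uniformity step (one surviving product $md_1\in D$ forces $l_m$ to be a bijection on all of ${}_iH_j$) that the paper leaves implicit.
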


\section{Cell algebra structures on monoid algebras}

     In \cite{May}, a class of algebras called cell algebras is defined which generalize the cellular algebras of Graham and Lehrer \cite{GL}.  These algebras had previously been introduced and studied as ``standardly based algebras'' by Du and Rui in \cite{DR}.  Such algebras share many of the nice properties of cellular algebras.  We will give conditions on a monoid $M$ and domain $R$ such that the monoid algebra $R[M]$ will be a cell algebra and will construct a standard cell basis for such algebras. We first review the definition.
     
	 Let $R$ be a commutative integral domain with unit 1 and let $A$   be an associative, unital $R$-algebra.  Let $\Lambda $ be a finite set with a partial order $ \geqslant $ and for each $\lambda  \in \Lambda $ let $L\left( \lambda  \right),R\left( \lambda  \right)$ be finite sets of ``left indices'' and ``right indices''. Assume that for each $\lambda  \in \Lambda ,s \in L\left( \lambda  \right),{\text{ and }}t \in R\left( \lambda  \right)$ there is an element $\,_s C_t ^\lambda   \in A$ such that the map $(\lambda ,s,t) \mapsto \,_s C_t ^\lambda  $ is injective and $C = \left\{ {_s C_t ^\lambda  :\lambda  \in \Lambda ,s \in L(\lambda ),t \in R\left( \lambda  \right)} \right\}$ is a free $R$-basis for $A$.  Define $R$-submodules of $A$ by $A^\lambda   = R{\text{ - span of }}\left\{ {_s C_t ^\mu  :\mu  \in \Lambda ,\mu  \geqslant \lambda ,s \in L(\mu ),t \in R\left( \mu  \right)} \right\}$ and $\hat A^\lambda   = R{\text{ - span of }}\left\{ {_s C_t ^\mu  :\mu  \in \Lambda ,\mu  > \lambda ,s \in L(\mu ),t \in R\left( \mu  \right)} \right\}$.

\begin {definition} \label {d3.1}  Given $(A,\Lambda,C)$, $A$ is a cell algebra with poset $\Lambda$ and cell basis $C$ if 
    \begin{description}
     \item[i] For any $a \in A,\lambda  \in \Lambda ,{\text{ and }}s,s' \in L\left( \lambda  \right)$, there exists $r_L  = r_L \left( {a,\lambda ,s,s'} \right) \in R$ such that, for any $t \in R\left( \lambda  \right)$, $a \cdot \,_s C_t ^\lambda   = \sum\limits_{s' \in L\left( \lambda  \right)} {r_L  \cdot \,_{s'} C_t ^\lambda  } \, \, \bmod \hat A^\lambda  $, and
      \item[ii] For any $a \in A,\lambda  \in \Lambda ,{\text{ and }}t,t' \in R\left( \lambda  \right)$, there exists  $r_R  = r_R \left( {a,\lambda ,t,t'} \right) \in R$ such that, for any $s \in L\left( \lambda  \right)$, $_s C_t ^\lambda   \cdot a = \sum\limits_{t' \in R\left( \lambda  \right)} {r_R  \cdot \,_s C_{t'} ^\lambda  } \, \,  \bmod \hat A^\lambda  $ .
     \end{description}
\end {definition}

Now for each $\mathcal{D}$-class $D$ in a finite monoid $M$, choose a base element $\gamma _D $ and base $\mathcal{H}$-class $H = H_\gamma  $ as above.  Then define the Schutzenberger group $G_D$ of $D$ to be $G_D = G_H^R $.  (Up to isomorphism, this is independent of the choice of base class $H$.)  Let $R\left[ {G_D} \right]$ be the group algebra of $G_D $ over $R$.
\begin{definition}  \label{d3.2}
A monoid $M$ satifies the $R$-C.A. condition for a given domain $R$ if $R\left[ {G_D } \right]$ has a cell algebra structure for every $\mathcal{D}$-class $D$ in $M$.
\end{definition}

If an $\mathcal{H}$-class $H$ contains an idempotent, then $H$ is actually a subgroup of $M$.  In fact, the maximal subgroups of $M$ are just the $\mathcal{H}$-classes which contain an idempotent.  In this case the group $H$ is isomophic to the Schutzenberger group $G_H^R $.  If $M$ is a \emph{regular} semigroup, then every $\mathcal{D}$-class $D$ contains an idempotent $e$, so $G_D$ is isomorphic to a maximal subgroup $H_e$ of $M$.  Then for regular semigroups $M$ the $R$-C.A. condition is equivalent to requiring that $R[G]$ have a cell algebra structure for every maximal subgroup $G$ of $M$.

If $M$ is a monoid (such as a transformation semigroup $\mathcal{T}_r $ or a partial transformation semigroup $\mathcal{PT}_r $) for which every group $G_D$ is a symmetric group, then the usual Murphy basis gives a cellular (and hence cell) algebra structure to $R[G_D]$ for any domain $R$.  Thus $M$ satisfies the $R$-C.A. condition for any $R$.  

For any finite monoid $M$, if $k$ is a field of characteristic $0$ or characteristic $p$ where $p$ does not divide the order of any $G_D$, then by Maschke's theorem, every $k[G_D]$ is semisimple.  If, in addition, $k$ is algebraically closed, then each $k[G_D]$ is split semisimple.  Such algebras are products of matrix algebras over $k$ and have a natural cellular (hence cell) algebra basis. Thus if $k$ is algebraically closed and of good characteristic relative to $M$, then any $M$ satisfies the $k$-C.A. condition.

Our main result is the following theorem.

\begin{theorem}  \label{t3.1}
  Let $M$ be a finite monoid satifying the $R$-C.A. condition for a domain $R$.  Then $A = R[M]$ is a cell algebra.  The choice of a cell basis for each algebra $R[G_D]$ gives rise to a standard cell basis for $A$.
\end{theorem}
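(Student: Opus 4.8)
The plan is to manufacture the cell datum for $A=R[M]$ by stacking the internal cell structure of each Schutzenberger group algebra on top of the $\mathcal{D}$-class filtration of Section 2. For each $D\in\mathbb{D}$ the $R$-C.A. condition supplies a cell datum for $R[G_D]$; write $\Lambda_D$ for its poset, $L_D(\lambda),R_D(\lambda)$ for its index sets, and ${}_s c_t^{\lambda}$ for its cell basis. I would take the poset of $A$ to be
\[ \Lambda=\{(D,\lambda):D\in\mathbb{D},\ \lambda\in\Lambda_D\}, \]
ordered by $(D',\lambda')\ge(D,\lambda)$ iff $D'<D$ in the $\mathcal{D}$-order, or $D'=D$ and $\lambda'\ge_{\Lambda_D}\lambda$; reflexivity, antisymmetry and transitivity follow from those of the two constituent orders together with transitivity of the $\mathcal{D}$-order. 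Using the egg-box of $D$, set $L\bigl((D,\lambda)\bigr)=\{1,\dots,n(D,R)\}\times L_D(\lambda)$ and $R\bigl((D,\lambda)\bigr)=\{1,\dots,n(D,L)\}\times R_D(\lambda)$, and define
\[ {}_{(i,s)}C^{(D,\lambda)}_{(j,t)}\;=\;a_i\,\phi_H^R\bigl({}_s c_t^{\lambda}\bigr)\,b_j\;=\;{}_i\psi_j\bigl(\phi_H^R({}_s c_t^{\lambda})\bigr). \]

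First I would confirm that $C$ is a free basis, injectively indexed. For fixed $D$ the elements $\phi_H^R({}_s c_t^{\lambda})$ run over a basis of $R[H_\gamma]$ as $(\lambda,s,t)$ runs over the group cell basis (because $\phi_H^R$ is a module isomorphism), and ${}_i\psi_j$ carries that basis to a basis of $R[{}_iH_j]$; summing over the disjoint $\mathcal{H}$-classes ${}_iH_j\subseteq D$ and then over $D$ recovers the standard basis $M$ of $R[M]$. A bookkeeping check then shows
\[ \hat A^{(D,\lambda)}=\hat A^D\ \oplus\ \bigl(R\text{-span of the }{}_{(i,s)}C^{(D,\lambda')}_{(j,t)}\text{ with }\lambda'>_{\Lambda_D}\lambda\bigr), \]
where $\hat A^D=\bigoplus_{D'<D}R[D']$ is the ideal from Section 2; this is exactly the identification that makes the two filtrations compatible.

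Next I would verify the two axioms of Definition \ref{d3.1}. Since each is $R$-linear in the algebra element, it suffices to take $a=m\in M$. For axiom i, I compute $m\cdot{}_{(i,s)}C^{(D,\lambda)}_{(j,t)}=m\cdot a_i\phi_H^R({}_s c_t^{\lambda})b_j$ via Corollary \ref{c2.2}: in its case (i) the product lies in $\hat A^D\subseteq\hat A^{(D,\lambda)}$, so $r_L=0$; in its case (ii) it equals $a_k\phi_H^R\bigl(r_{\overline{m^*}}\cdot{}_s c_t^{\lambda}\bigr)b_j$. Applying cell axiom i for $R[G_D]$ to the group-algebra element $r_{\overline{m^*}}$ rewrites $r_{\overline{m^*}}\cdot{}_s c_t^{\lambda}$ as $\sum_{s'\in L_D(\lambda)}\rho_{s'}\,{}_{s'}c_t^{\lambda}$ modulo the span of the ${}_{s'}c_{t'}^{\lambda'}$ with $\lambda'>_{\Lambda_D}\lambda$, and applying the $R$-linear map $x\mapsto a_k\phi_H^R(x)b_j$ gives
\[ m\cdot{}_{(i,s)}C^{(D,\lambda)}_{(j,t)}\;=\;\sum_{s'\in L_D(\lambda)}\rho_{s'}\,{}_{(k,s')}C^{(D,\lambda)}_{(j,t)}\pmod{\hat A^{(D,\lambda)}}. \]
Thus the required scalar is $\rho_{s'}$ when the target left index is $(k,s')$ and $0$ otherwise. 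Axiom ii is entirely dual, using Corollary \ref{c2.1}: right multiplication by $m$ induces right multiplication by $r_{m^*}$ on ${}_s c_t^{\lambda}$, shifts the column from $j$ to $k$, and invokes cell axiom ii for $R[G_D]$.

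The only step that is more than bookkeeping — and the one I expect to be the main obstacle — is checking the uniformity required by Definition \ref{d3.1}: $r_L$ must not depend on the right index $(j,t)$, and $r_R$ must not depend on the left index $(i,s)$. Independence of $t$ (resp.\ $s$) is precisely the content of the cell axiom for $R[G_D]$. Independence of the egg-box coordinate is where the structure theory is needed: because $\mathcal{R}$ is a left congruence, for fixed $m$ and fixed row $i$ the alternative of Corollary \ref{c2.2} (case (i) versus case (ii), and the target row $k$) is the same for every column $j$; moreover the translations $b_j,\bar b_j$ cancel when one conjugates $x\mapsto mx$ back to $R[H_\gamma]$, so the induced group element $r_{\overline{m^*}}$, and hence every $\rho_{s'}$, is independent of $j$. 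Dually $\mathcal{L}$ is a right congruence, so the target column and the element $r_{m^*}$ governing axiom ii are independent of $i$. Once this uniformity is in hand the scalars $r_L,r_R$ are well defined, $(A,\Lambda,C)$ satisfies Definition \ref{d3.1}, and $A=R[M]$ is a cell algebra with the asserted standard cell basis.
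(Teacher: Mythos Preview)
Your construction and verification are correct and follow the same route as the paper: the poset $\Lambda=\{(D,\lambda)\}$, the index sets built from egg-box rows/columns crossed with the group index sets, the basis elements ${}_{(i,s)}C^{(D,\lambda)}_{(j,t)}=a_i\phi_H^R({}_s c_t^\lambda)b_j$, and the verification of the two axioms via Corollaries \ref{c2.1} and \ref{c2.2} all match the paper's proof. You are in fact more careful than the paper on one point: the paper simply asserts that $r_L$ is independent of the $\mathcal{L}$-class $L$ (the column $j$), whereas you justify this explicitly---and the cleanest reason is the one already encoded in Corollary \ref{c2.2}, namely that the formula $m^*=\bar a_k m a_i$ does not involve $j$ at all (so there is no need to invoke cancellation of $b_j,\bar b_j$; they never enter $m^*$).
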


\begin{proof}
  For a given $D \in \mathbb{D}$, put $A_D  = R[G_D ]$ and assume $\Lambda _D ,L_D ,R_D $ define a cell algebra structure on $A_D $ with cell basis \[C_D  = \left\{ {_s C_t ^\lambda  :\lambda  \in \Lambda _D ,s \in L_D (\lambda ),t \in R_D \left( \lambda  \right)} \right\}.\]
  Define a poset $\Lambda $ to consist of all pairs $\left( {D,\lambda } \right)$ where $D$ is a $\mathcal{D}$-class in $M$ and $\lambda  \in \Lambda _D $.  Define the partial order by $(D_1 ,\lambda _1 ) > (D_2 ,\lambda _2 )$ if $D_1  < D_2 $  or $D_1  = D_2 {\text{ and }}\lambda _1  > \lambda _2 $ in $\Lambda _{D_1 } $.    
For $(D,\lambda ) \in \Lambda $, define $L\left( {D,\lambda } \right)$ to be all pairs $\left( {R,s} \right)$ where $R$ is an $\mathcal{R}$-class contained in $D$ and $s \in L_D \left( \lambda  \right)$.  Similarly, define $R\left( {D,\lambda } \right)$ to be all pairs $\left( {L,t} \right)$ where $L$ is an $\mathcal{L}$-class contained in $D$ and $t \in R_D (\lambda )$.  Finally, given $(D,\lambda ) \in \Lambda \,,\,(R,s) \in L(D,\lambda )\,,\,(L,t) \in R(D,\lambda )$, assume $R$ corresponds to row $i$ and $L$ corresponds to column $j$ in the ``egg-box'' for $D$.  Then  define \[_{(R,s)} C_{(L,t)}^{(D,\lambda )}  = {}_i\psi _j \left( {\phi _H^R \left( {_s C_t ^\lambda  } \right)} \right) = a_i \phi _H^R \left( {_s C_t ^\lambda  } \right)b_j .\]  
For fixed $D,R,L$,  since $\left\{ {_s C_t ^\lambda  :\lambda  \in \Lambda _D ,s \in L_D (\lambda ),t \in R_D (\lambda )} \right\}$ is a basis for $R[G_D ]$ by assumption and ${}_i\psi _j {\text{ and }}\phi _H^R $ are bijective,  \[\left\{ {_{(R,s)} C_{(L,t)}^{(D,\lambda )} :\lambda  \in \Lambda _D ,s \in L_D (\lambda ),t \in R_D (\lambda )} \right\}\] will give a basis for $R[{}_iH_j ]$.  Then since $A = R[M]$ is the direct sum of the free submodules $R[H]$ as $H$ varies over all $\mathcal{H}$-classes in $M$, 
\[C = \left\{ {_{(R,s)} C_{(L,t)}^{(D,\lambda )} :(D,\lambda ) \in \Lambda ,(R,s) \in L(D,\lambda ),(L,t) \in R(D,\lambda )} \right\}\]
is a basis for $A$.  To show $C$ is a cell basis, we must check the cell conditions (i) and (ii).
Note first that any element $m$ in a $\mathcal{D}$-class $D_m $ will be in the span of $\left\{ {_{(R,s)} C_{(L,t)}^{(D_m ,\mu )} :\mu  \in \Lambda _{D_m } ,(R,s) \in L(D_m ,\mu ),(L,t) \in R(D_m ,\mu )} \right\}$.  Then if $D_m  < D$ for some $\mathcal{D}$-class $D$, we have $(D_m ,\mu ) > (D,\lambda )$ for any $\mu  \in \Lambda _{D_m} $ and $\lambda \in \Lambda_D $,  so $m \in \hat A^{(D,\lambda )} $.  So $\mathop  \oplus \nolimits_{D' < D} R\left[ {D'} \right] = \hat A^D  \subseteq \hat A^{(D,\lambda )} $.
To  prove (i), we can assume $a$ is a basis element $m \in M$.  Take $(D,\lambda ) \in \Lambda \,,\,(R,s) \in L(D,\lambda )\,,\,(L,t) \in R(D,\lambda )$ and assume $R$ corresponds to row $i$ and $L$ corresponds to column $j$ in the ``egg-box'' for $D$.  Then $_{(R,s)} C_{(L,t)}^{(D,\lambda )}  = {}_i\psi _j \left( {\phi _H^R \left( {_s C_t ^\lambda  } \right)} \right) = a_i \phi _H^R \left( {_s C_t ^\lambda  } \right)b_j  \in R[{}_iH_j ]$.   By corollary \ref{c2.2} there are two cases to consider.

  Case i:  $m \cdot R\left[ {{}_iH_j } \right] \subseteq \mathop  \oplus \nolimits_{D' < D} R\left[ {D'} \right]$ .  Then $m \cdot _{(R,s)} C_{(L,t)}^{(D,\lambda )}  \in m \cdot R[{}_iH_j ] \subseteq \mathop  \oplus \nolimits_{D < D'} R[D'] \subseteq \hat A^{(D,\lambda )} $ and we can satisfy (i) by taking all coefficients $r_L $ to be zero.

Case ii:  $m \cdot R\left[ {{}_iH_j } \right] = R\left[ {{}_kH_j } \right]$ for some $k$.  By corollary 2.2, since $_{(R,s)} C_{(L,t)}^{(D,\lambda )}  = a_i \phi _H^R \left( {_s C_t ^\lambda  } \right)b_j  \in R[{}_iH_j ]$, then $m\,\,_{(R,s)} C_{(L,t)}^{(D,\lambda )}  = a_k \phi _H^R \left( {r_{\bar {m^ *  } } \,_s C_t ^\lambda  } \right)b_j $, where $m^*  \equiv \bar a_k ma_j  \in LT\left( H \right)$
 and $\bar {m^ *  }  = \vartheta  \circ \psi \left( {l_{m^ *  } } \right) \in RT(H)$.  Since $g = r_{\bar {m^ *  } }  \in G_D $, the cell algebra property (i) for $R[G_D ]$ gives $g \cdot \,_s C_t ^\lambda   = \sum\limits_{s' \in L_D \left( \lambda  \right)} {r_L  \cdot \,_{s'} C_t ^\lambda  } \bmod \hat A_D ^\lambda  $, where $r_L $ depends on $s,s',\lambda ,{\text{ and }}m$, but is independent of $t$.  Then 
  \[\begin{aligned}
    m\,\,_{(R,s)} C_{(L,t)}^{(D,\lambda )}  &= a_k \phi _H^R \left( {g\,_s C_t ^\lambda  } \right)b_j  \\
    &= \sum\limits_{s' \in L_D (\lambda )} {r_L  \cdot a_k } \phi _H^R \left( {_{s'} C_t ^\lambda  } \right)b_j \text{ modulo } a_k \phi _H^R \left( {\hat A_D ^\lambda  } \right)b_j 
   \end{aligned}\]
    where \[ a_k \phi _H^R \left( {\hat A_D ^\lambda  } \right)b_j \subseteq {\text{span}}\left\{ {_{(R,s)} C_{(L,t)}^{(D,\mu )} :\mu  > \lambda ,s \in L_D (\mu ),t \in R_D (\mu )} \right\} \subseteq \hat A^{(D,\lambda )} .\]
      But $a_k \phi _H^R \left( {_{s'} C_t ^\lambda  } \right)b_j  = \,_{(R',s')} C_{(L,t)}^{(D,\lambda )} $ where $R'$ is the $\mathcal{R}$-class corresponding to row $k$ of the ``egg-box''.  Then $m \cdot \,_{(R,s)} C_{(L,t)}^{(D,\lambda )}  = \sum\limits_{s' \in L_D (\lambda )} {r_L  \cdot a_k } \phi _H^R \left( {_{s'} C_t ^\lambda  } \right)b_j  = \sum\limits_{s' \in L_D (\lambda )} {r_L  \cdot \,_{(R',s')} C_{(L,t)}^{(D,\lambda )} } \,\,\bmod \hat A^{(D,\lambda )} $.  Since $r_L $ is independent of $L$ and $t$, this yields property (i) for this case ii.
   
  The proof of condition (ii) is parallel.  Thus $C$ is a cell basis and $A = R[M]$ is a cell algebra. 
\end{proof}

If $M$ is a finite monoid satisfying the $R$-C.A. condition, we will assume a fixed cell algebra structure is given to each $R[G_D]$.  We will then call the cell algebra structure obtained in the proof of theorem \ref{t3.1} the \emph{standard cell algebra structure} on $R[M]$.

In \cite{East}, \cite{Wil}, and \cite{GX}, East, Wilcox, and Guo and Xi worked with finite regular semigroups with cellular algebra (hence cell algebra) structure on $R[G]$ for maximal subgroups $G$ of $M$.  Their examples therefore satisfy the $R$-C.A. condition and can be seen to be cell algebras without considering the complicated involution requirements involved in showing a cellular algebra structure.  In \cite{East}, East gives examples of inverse semigroups with cellular $R[G]$ for all maximal $G$ which lack an appropriate involution and therefore do not have cellular $R[M]$.  These examples would be cell algebras by theorem \ref{t3.1}.

More typical examples of cell algebras that are not cellular are the algebras $R[M]$ for $M$ a transformation semigroup $\mathcal{T}_r $ or partial transformation semigroup $\mathcal{PT}_r $.  These were shown to be cell algebras in \cite{May}.  For these examples, a $\mathcal{D}$-class $D$ consists of mappings of a given rank $i$ and the group $G_D$ is the symmetric group $\mathfrak{G}_i$.  Since, as remarked above, the symmetric groups have cellular (hence cell) structures on their group algebras, theorem \ref{t3.1} applies and provides a cell algebra structure on $M$.

\section{Properties of the cell algebra $A = R[M]$}

In this section we assume that $M$ is a finite monoid satisfying the $R$-C.A. condition, that is, such that for every $D \in \mathbb{D}$ the group algebra $R[G_D ]$ of the Schutzenberger group for $D$ is a cell algebra.  Then by Theorem \ref{t3.1}, $A = R[M]$ is a cell algebra and we can apply the results in \cite{May} to $A$ with the standard cell algebra structure.

For $(D,\lambda ) \in \Lambda $, the left cell module $\,_L C^{(D,\lambda )} $ is a left $A$-module which is a free $R$-module with basis $\left\{ {_{(R,s)} C^{(D,\lambda )} :(R,s) \in L\left( {D,\lambda } \right)} \right\}$.   Similarly, the right cell module $C_R ^{(D,\lambda )} $ for $(D,\lambda )$ is a right $A$-module and a free $R$-module with basis $\left\{ {C_{(L,t)} ^{(D,\lambda )} :(L,t) \in R\left( {D,\lambda } \right)} \right\}$.  For each $(D,\lambda ) \in \Lambda $ there is an $R$-bilinear map $ \left\langle - , - \right\rangle \,:\,\left( {C_R ^{(D,\lambda )} ,\,_L C^{(D,\lambda )} } \right) \to R$ defined by the property $\left( {_{(R',s')} C_{(L,t)}^{(D,\lambda )} } \right) \cdot \left( {_{(R,s)} C_{(L',t')}^{(D,\lambda )} } \right) = \left\langle {C_{(L,t)}^{(D,\lambda )} ,\,_{(R,s)} C^{(D,\lambda )} } \right\rangle \,_{(R',s')} C_{(L',t')}^{(D,\lambda )} \,\bmod \,\hat A^{(D,\lambda )} $
  for any choice of $R',s',L',t'$.
  
Right and left radicals are defined by 
\[{\text{rad}}\left( {C_R^{(D,\lambda )} } \right) = \left\{ {x \in C_R^{(D,\lambda )} :\left\langle {x,y} \right\rangle  = 0{\text{ for all }}y \in \,_L C^{(D,\lambda )} } \right\}\]
\[{\text{rad}}\left( {\,_L C^{(D,\lambda )} } \right) = \left\{ {y \in \,_L C^{(D,\lambda )} :\left\langle {x,y} \right\rangle  = 0{\text{ for all }}x \in C_R^{(D,\lambda )} } \right\}.\]
Then define $D_R^{(D,\lambda )}  = \frac{{C_R^{(D,\lambda )} }}
{{{\text{rad}}\left( {C_R^{(D,\lambda )} } \right)}}$ and  $\,_L D^{(D,\lambda )}  = \frac{{\,_L C^{(D,\lambda )} }}
{{{\text{rad}}\left( {\,_L C^{(D,\lambda )} } \right)}}$.   Finally, define $\Lambda _0  = \left\{ {(D,\lambda ) \in \Lambda :\left\langle {x,y} \right\rangle  \ne 0{\text{ for some }}x \in C_R^{(D,\lambda )} ,y \in \,_L C^{(D,\lambda )} } \right\}$.  Evidently, $\lambda  \in \Lambda _0  \Leftrightarrow D_R^{(D,\lambda )}  \ne 0 \Leftrightarrow \,_L D^{(D,\lambda )}  \ne 0$.  A major result of \cite{May} is

\begin{theorem}   \label{t4.1}
  Assume $R = k$ is a field.  Then \\
  (a) $\left\{ {D_R ^{(D,\mu )} :(D,\mu ) \in \Lambda _0 } \right\}$ is a complete set of pairwise inequivalent irreducible right $A$-modules and \\ (b) $\left\{ {\,_L D^{(D,\mu )} :(D,\mu ) \in \Lambda _0 } \right\}$ is a complete set of pairwise inequivalent irreducible left $A$-modules.
\end{theorem}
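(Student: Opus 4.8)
The plan is to follow the strategy of Graham and Lehrer for cellular algebras \cite{GL} and of Du and Rui for standardly based algebras \cite{DR}, adapted to this involution-free setting. The one structural input that replaces the cellular anti-automorphism is an \emph{associativity} property of the bilinear form: for $x \in C_R^{(D,\lambda)}$, $y \in \,_L C^{(D,\lambda)}$ and $a \in A$ one should have $\langle xa, y\rangle = \langle x, ay\rangle$. I would obtain this by expanding the triple product $(\,_{s'}C_t^\lambda)\,a\,(\,_s C_{t'}^\lambda)$ in two ways modulo $\hat A^\lambda$, using cell conditions (i) and (ii) together with the defining relation of $\langle-,-\rangle$, and comparing the coefficient of $\,_{s'}C_{t'}^\lambda$. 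An immediate consequence is that $\rad(C_R^{(D,\lambda)})$ and $\rad(\,_L C^{(D,\lambda)})$ are $A$-submodules, so $D_R^{(D,\lambda)}$ and $\,_L D^{(D,\lambda)}$ are genuine $A$-modules, nonzero precisely when $(D,\lambda)\in\Lambda_0$.

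The heart of the argument is the computation $x\cdot(\,_s C_{t'}^\lambda) = \langle x,\,_s C^\lambda\rangle\,C_{t'}^\lambda$ inside $C_R^{(D,\lambda)}$, valid for every $x$; it drops out of the defining relation of the form together with the description of the right action. Given this, any $x\notin\rad(C_R^{(D,\lambda)})$ admits an $s$ with $\langle x,\,_s C^\lambda\rangle\neq 0$, so right multiplication by the elements $\,_s C_{t'}^\lambda$ produces every basis vector $C_{t'}^\lambda$ up to a nonzero scalar, whence $x$ generates all of $C_R^{(D,\lambda)}$. This simultaneously shows that $\rad(C_R^{(D,\lambda)})$ is the unique maximal submodule and that, for $(D,\lambda)\in\Lambda_0$, the head $D_R^{(D,\lambda)}$ is irreducible; the left case is identical.

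For pairwise inequivalence I would attach to each right module $V$ the isomorphism invariant $J(V)=\{(D,\nu):\text{some }\,_u C_v^{(D,\nu)}\text{ acts nonzero on }V\}$. Two ideal computations pin $(D,\lambda)$ down from $D_R^{(D,\lambda)}$. First, $\hat A^{(D,\lambda)}$ annihilates $C_R^{(D,\lambda)}$, since a $\lambda$-layer element times an element of the two-sided ideal $\hat A^{(D,\lambda)}$ lands back in $\hat A^{(D,\lambda)}$. Second, a $\nu$-layer element with $\nu>\lambda$ or $\nu$ incomparable to $\lambda$ also annihilates $C_R^{(D,\lambda)}$, because the relevant product lies in $A^\nu\cap A^\lambda\subseteq\hat A^{(D,\lambda)}$. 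Hence $J(D_R^{(D,\lambda)})\subseteq\{\nu:\nu\leq\lambda\}$, while $\lambda\in J(D_R^{(D,\lambda)})$ by the previous paragraph; so $\lambda$ is the unique maximal element of $J(D_R^{(D,\lambda)})$, and since $J$ is an isomorphism invariant, $D_R^{(D,\mu)}\cong D_R^{(D',\mu')}$ forces $(D,\mu)=(D',\mu')$.

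Completeness uses the cell filtration: refining the order on $\Lambda$ to a total order gives a chain of two-sided ideals whose successive quotients are $A^{(D,\lambda)}/\hat A^{(D,\lambda)}\cong\bigoplus_{s\in L(\lambda)}C_R^{(D,\lambda)}$, so every irreducible right $A$-module $V$ is a quotient of some such section, hence of some $C_R^{(D,\lambda)}$. Choosing $\lambda$ maximal with $V\cdot A^{(D,\lambda)}\neq 0$ forces $V\cdot\hat A^{(D,\lambda)}=0$, and the relation $x\cdot(\,_s C_{t'}^\lambda)=\langle x,\,_s C^\lambda\rangle C_{t'}^\lambda$ then forces the form at $\lambda$ to be nonzero, i.e.\ $(D,\lambda)\in\Lambda_0$; the unique-maximal-submodule property identifies $V\cong D_R^{(D,\lambda)}$. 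Part (b) follows by the same argument with left and right interchanged. I expect the main obstacle to be the bookkeeping in the ideal computations of the distinctness step—keeping straight which products fall into $\hat A^{(D,\lambda)}$ versus contribute to the $\lambda$-layer when $\nu$ and $\lambda$ are incomparable—since this is exactly where the partial (rather than total) order on $\Lambda$ could cause trouble; the associativity identity for the form is the other delicate point, being the device that compensates for the absence of an anti-automorphism.
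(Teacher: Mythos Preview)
The paper does not actually prove Theorem~\ref{t4.1}: it is stated as ``a major result of \cite{May}'' and quoted verbatim from that reference (where the algebras are the ``standardly based algebras'' of Du and Rui \cite{DR}). So there is no argument in the paper to compare your proposal against.

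That said, your proposal is essentially the standard Graham--Lehrer/Du--Rui argument and is correct in outline. A couple of remarks. First, for the associativity identity $\langle xa,y\rangle=\langle x,ay\rangle$ you need the fact that $A^{(D,\lambda)}$ and $\hat A^{(D,\lambda)}$ are two-sided ideals; this is immediate from the cell axioms but should be stated, since it is also what underlies your ideal computation in the distinctness step. Second, in the completeness step your phrasing ``every irreducible $V$ is a quotient of some $C_R^{(D,\lambda)}$'' is slightly indirect: what the filtration gives you is that some $_sC_t^{(D,\lambda)}$ acts nonzero on $V$, and then the identity $v\cdot{}_sC_t^{(D,\lambda)}\cdot a=\sum_{t'}r_R(a,\lambda,t,t')\,v\cdot{}_sC_{t'}^{(D,\lambda)}$ (for a maximal such $(D,\lambda)$) exhibits $v\cdot{}_sC_t^{(D,\lambda)}\mapsto C_t^{(D,\lambda)}$ as a surjection $C_R^{(D,\lambda)}\twoheadrightarrow V$. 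With these two points made explicit, your argument goes through and is exactly the proof one finds in \cite{May} and \cite{DR}.
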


To find $\Lambda _0 $ for $A$ we need to relate the bracket $\left\langle { - , - } \right\rangle $ on $A$ to the brackets $\left\langle { - , - } \right\rangle _D $ on the cell algebras $A_D  = R[G_D ]$.  For a given $\mathcal{D}$-class $D$, write $R_i $ for the $\mathcal{R}$-class corresponding to row $i$ of the ``egg-box'' and $L_j $ for the $\mathcal{L}$-class corresponding to column $j$.  

\begin{definition}  \label{d4.1}
$R_i \,,\,L_j $ are matched if there exist $x \in L_j \,,\,y \in R_i $ such that $xy \in D$.  $R_i \,,\,L_j $ are unmatched if $L_j R_i  \cap D = \emptyset $.
\end{definition}

\begin{lemma}  \label{l4.1}
Assume $R_i \,,\,L_j $ are matched.  Write $x = a_{i'} \phi _H^R (\xi )b_j  \in R[L_j ]$ and $y = a_i \phi _H^R (\eta )b_{j'}  \in R[R_i ]$ for any $\xi ,\eta  \in R[G_D ]$.  Then $xy = a_{i'} \phi _H^R \left( {\xi r_{m(i,j)} \eta } \right)b_{j'} $ where $m(i,j) \equiv b_j a_i \gamma  \in RT(H)$.
\end{lemma}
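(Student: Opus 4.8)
The plan is to treat the two assertions separately: first that $m(i,j) \equiv b_j a_i \gamma$ actually lies in $RT(H)$, so that $r_{m(i,j)}$ is a bona fide element of $G_D = G_H^R$; and then, granting this, to collapse the product $xy$ using the transport formula $\phi_H^R(z)\cdot n = \phi_H^R(z\,r_n)$ for $n \in RT(H)$ recorded just before Lemma \ref{l2.1}. The stated forms already place $x$ in the single $\mathcal{H}$-class ${}_{i'}H_j \subseteq L_j$ and $y$ in ${}_iH_{j'} \subseteq R_i$, with $\xi,\eta$ arbitrary; the general elements of $R[L_j]$ and $R[R_i]$ are recovered by summing over $i'$ and $j'$ and by linearity.

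For the membership $m(i,j)\in RT(H)$ I would first convert the matched hypothesis into a statement about the canonical representatives $\gamma b_j \in {}_1H_j \subseteq L_j$ and $a_i \gamma \in {}_iH_1 \subseteq R_i$. If $u \in L_j$ and $v \in R_i$ satisfy $uv \in D$, then $u \mathcal{L} \gamma b_j$ and $v \mathcal{R} a_i \gamma$ provide elements of $M$ transporting $u \leftrightarrow \gamma b_j$ on the left and $v \leftrightarrow a_i \gamma$ on the right; substituting these shows $\gamma b_j a_i \gamma \in M(uv)M$ and $uv \in M(\gamma b_j a_i \gamma)M$, so $(\gamma b_j)(a_i \gamma) = \gamma\,m(i,j)$ lies in $D$. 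Next I would pin down its $\mathcal{H}$-class: writing $p = \gamma b_j$, $q = a_i \gamma$, the equalities $D_p = D_{pq} = D_q = D$ let me apply Lemma \ref{l2.2}(i) to get $R_{pq} = R_p = R_1$ and Lemma \ref{l2.2}(ii) to get $L_{pq} = L_q = L_1$, whence $pq \in {}_1H_1 = H$. Thus $\gamma\,m(i,j) \in H$ with $\gamma \in H$, and Lemma \ref{l2.1}(i) yields $m(i,j) \in RT(H)$.

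With $m(i,j) \in RT(H)$ established, computing $xy = a_{i'}\phi_H^R(\xi)\,b_j a_i\,\phi_H^R(\eta)\,b_{j'}$ reduces to the middle factor $\phi_H^R(\xi)\,b_j a_i\,\phi_H^R(\eta)$. I would rewrite $\phi_H^R(\eta)$ by lifting each group element through $\vartheta$: since $\phi_H^R(r_n) = \gamma n$, one has $\phi_H^R(\eta) = \gamma\,\tilde\eta$ for the lift $\tilde\eta \in R[RT(H)]$ of $\eta$. Then $b_j a_i\,\phi_H^R(\eta) = (b_j a_i \gamma)\,\tilde\eta = m(i,j)\,\tilde\eta$, and applying $\phi_H^R(\xi)\cdot m(i,j) = \phi_H^R(\xi\,r_{m(i,j)})$ followed by $\phi_H^R(z)\cdot\tilde\eta = \phi_H^R(z\,\eta)$ (the latter obtained by expanding $\tilde\eta$ and invoking the transport formula one group element at a time) collapses the middle factor to $\phi_H^R(\xi\,r_{m(i,j)}\,\eta)$. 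Re-attaching $a_{i'}$ and $b_{j'}$ gives $xy = a_{i'}\phi_H^R(\xi\,r_{m(i,j)}\,\eta)b_{j'}$, as claimed, and incidentally confirms $xy \in R[{}_{i'}H_{j'}] \subseteq R[D]$.

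The algebra of the third paragraph is entirely driven by the two transport identities and presents no difficulty once $r_{m(i,j)}$ is known to be a group element. The real content—and the step I expect to be the main obstacle—is the first assertion: extracting from the bare existence of a single matched pair $(u,v)$ the precise conclusion that the specific element $b_j a_i \gamma$ right-translates $H$ into itself. The delicate point is upgrading $\gamma b_j a_i \gamma \in D$ to $\gamma b_j a_i \gamma \in H$, which is where \emph{both} halves of Lemma \ref{l2.2} are essential; without locating the product in the base class $H$ one could not invoke Lemma \ref{l2.1}(i) to conclude $m(i,j)\in RT(H)$.
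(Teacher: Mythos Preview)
Your argument is correct and structurally parallel to the paper's: both establish $\gamma\,m(i,j)\in H$ and then invoke Lemma~\ref{l2.1}(i), and both finish by unwinding $\phi_H^R$ on group elements and extending by linearity. The one tactical difference lies in how $\gamma b_j a_i\gamma\in H$ is obtained. The paper fixes the particular $i',j'$ coming from the matched pair, appeals to Propositions~\ref{p2.1} and~\ref{p2.2} to know that \emph{every} product ${}_{i'}H_j\cdot{}_iH_{j'}$ lands in ${}_{i'}H_{j'}$, specializes to $x=a_{i'}\gamma b_j$, $y=a_i\gamma b_{j'}$, and then strips off $a_{i'}$ and $b_{j'}$ with the inverse translations $\bar a_{i'},\bar b_{j'}$ of Lemma~\ref{l2.3}. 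You instead pass directly to the canonical representatives $p=\gamma b_j$ and $q=a_i\gamma$, use a $\mathcal{J}$-equivalence argument to place $pq$ in $D$, and then apply both parts of Lemma~\ref{l2.2} to pin $pq$ down to ${}_1H_1=H$. Your route is slightly more economical in that it never introduces the auxiliary indices $i',j'$ or the inverse maps $\bar a_{i'},\bar b_{j'}$, and it makes the role of Lemma~\ref{l2.2} fully explicit; the paper's route, on the other hand, leverages the stronger Propositions~\ref{p2.1}/\ref{p2.2} already on hand and so keeps the argument within the machinery developed in Section~2.
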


\begin{proof}
If $R_i \,,\,L_j $ are matched, then for some choice of $i',j'$ and some elements $x \in {}_{i'}H_j  \subseteq L_j \,,\,y \in {}_iH_{j'}  \subseteq R_i $ we have $xy \in {}_{i'}H_{j'}  \subseteq D$.  But then (for the given $i',j'$) we have $xy \in {}_{i'}H_{j'}  \subseteq D$ for \emph{any} $x \in {}_{i'}H_j \,,\,y \in {}_iH_{j'} $ by propositions \ref{p2.1} and \ref{p2.2}.  In particular, for $x = a_{i'} \gamma b_j  \in {}_{i'}H_j \,,\,y = a_i \gamma b_{j'}  \in {}_iH_{j'} $ we get $xy = a_{i'} \gamma b_j a_i \gamma b_{j'}  \in {}_{i'}H_{j'} $.  Then multiplying by $\bar a_{i'} $ on the left and $\bar b_{j'} $ on the right gives $\gamma b_j a_i \gamma  \in H$.  But then $m(i,j) \equiv b_j a_i \gamma  \in RT(H)$ by lemma \ref{l2.1}.  

Now consider any $i',j'$ and any $g = r_m ,h = r_n  \in G_D $ and let
\[x = a_{i'} \phi _H^R (g)b_j  \in R[L_j ]\,,\,y = a_i \phi _H^R (h)b_{j'}  \in R[R_i ].\]
Then 
\[
   \begin{aligned}
     xy &= (a_{i'} \phi _H^R \left( g \right)b_j )(a_i \phi _H^R \left( h \right)b_{j'} ) = a_{i'} \gamma mb_j a_i \gamma nb_{j'} \\
      &= a_{i'} \gamma m \cdot m(i,j) \cdot nb_{j'}  = a_{i'} \phi _H^R \left( {r_m r_{m(i,j)} r_n } \right)b_{j'} \\
       &= a_{i'} \phi _H^R \left( {gr_{m(i,j)} h} \right)b_{j'} .
   \end{aligned}
\]
Then by linearity of $\phi _H^R $, $xy = a_{i'} \phi _H^R \left( {\xi r_{m(i,j)} \eta } \right)b_{j'} $ for arbitrary $\xi ,\eta  \in R[G_D ]$ when $x = a_{i'} \phi _H^R (\xi )b_j  \in R[L_j ]\,,\,y = a_i \phi _H^R (\eta )b_{j'}  \in R[R_i ]$,  proving the lemma.
\end{proof}

For $i \in \left\{ {1,2, \cdots ,n(D,R)} \right\}$, define $\left( {_L C^{(D,\lambda )} } \right)_i $
 to be the free $R$-module with basis $\left\{ {_{(R_i ,s)} C^{(D,\lambda )} :\lambda  \in \Lambda _D ,s \in L_D (\lambda )} \right\}$, so $\,_L C^{(D,\lambda )}  = \mathop  \oplus \limits_i \left( {_L C^{(D,\lambda )} } \right)_i $.  Similarly, for $j \in \left\{ {1,2, \cdots ,n(D,L)} \right\}$, define $\left( {C_R ^{(D,\lambda )} } \right)_j $ to be the free $R$-module with basis $\left\{ {C_{(L_j ,t)}^{(D,\lambda )} :\lambda  \in \Lambda _D ,t \in R_D (\lambda )} \right\}$, so $C_R^{(D,\lambda )}  = \mathop  \oplus \limits_j \left( {C_R^{(D,\lambda )} } \right)_j $.  Notice that for each $i$, $\phi _i :\,_{(R_i ,s)} C^{(D,\lambda )}  \mapsto \,_s C^\lambda  $ gives an isomorphism (of $R$-modules) $\phi _i :\left( {_L C^{(D,\lambda )} } \right)_i  \to \,_L C^\lambda  $.  Similarly, $\phi _j :C_{(L_j ,t)}^{(D,\lambda )}  \mapsto C_t^\lambda  $ gives an isomorphism $\phi _j :\left( {C_R ^{(D,\lambda )} } \right)_j  \to C_R ^\lambda  $.

\begin{proposition}   \label{p4.1}
  Take $X \in \left( {C_R ^{(D,\lambda )} } \right)_j \,,\,Y \in \left( {_L C^{(D,\lambda )} } \right)_i $.  Then
\begin{enumerate} [\upshape(a)]
\item If $R_i \,,\,L_j $ are not matched, then $\left\langle {X,Y} \right\rangle  = 0$,
\item If $R_i \,,\,L_j $ are matched, then 
\[\left\langle {X,Y} \right\rangle  = \left\langle {\phi _j (X),r_{m(i,j)} \phi _i \left( Y \right)} \right\rangle _D  = \left\langle {r_{m(i,j)} \phi _j (X),\phi _i \left( Y \right)} \right\rangle _D \]
 where $m(i,j) \equiv b_j a_i \gamma  \in RT(H)$.
\end{enumerate}
\end{proposition}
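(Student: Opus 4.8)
The plan is to reduce to basis elements and then read the value of $\langle X,Y\rangle$ directly off the defining product relation for the bracket, using Lemma \ref{l4.1} to carry out the multiplication and the cell structure of $A_D = R[G_D]$ to identify the answer with $\langle-,-\rangle_D$. First I would use the $R$-bilinearity of $\langle-,-\rangle$ and the $R$-linearity of $\phi_i,\phi_j$ to assume $X = C_{(L_j,t)}^{(D,\lambda)}$ and $Y = {}_{(R_i,s)}C^{(D,\lambda)}$ are basis elements, so that $\phi_j(X) = C_t^\lambda$ and $\phi_i(Y) = {}_sC^\lambda$. For arbitrary $i',j',s',t'$ (with $R'$ in row $i'$ and $L'$ in column $j'$) the defining relation reads
\[\left({}_{(R',s')}C_{(L_j,t)}^{(D,\lambda)}\right)\left({}_{(R_i,s)}C_{(L',t')}^{(D,\lambda)}\right) = \langle X,Y\rangle\,{}_{(R',s')}C_{(L',t')}^{(D,\lambda)}\ \bmod\ \hat A^{(D,\lambda)},\]
and the two factors lie in $R[{}_{i'}H_j]\subseteq R[L_j]$ and $R[{}_iH_{j'}]\subseteq R[R_i]$, so their product lies in the span of $L_jR_i$.

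For part (a), if $R_i,L_j$ are unmatched then $L_jR_i\cap D=\emptyset$; since every $xy$ with $x\in L_j,\,y\in R_i$ has $D_{xy}\le D$, unmatchedness forces $D_{xy}<D$, so the product lies in $\hat A^D\subseteq\hat A^{(D,\lambda)}$. Reading the displayed relation modulo $\hat A^{(D,\lambda)}$ then gives $\langle X,Y\rangle=0$.

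For part (b), I would apply Lemma \ref{l4.1} with $\xi={}_{s'}C_t^\lambda$ and $\eta={}_sC_{t'}^\lambda$ to rewrite the product as $a_{i'}\phi_H^R\!\left({}_{s'}C_t^\lambda\,r_{m(i,j)}\,{}_sC_{t'}^\lambda\right)b_{j'}$, and then work entirely inside $A_D$. Grouping the central factor as $r_{m(i,j)}\cdot{}_sC_{t'}^\lambda$ and invoking cell property (i) for $A_D$ realizes it as the left cell-module action $r_{m(i,j)}\phi_i(Y)$; the bracket definition on $A_D$ then converts ${}_{s'}C_t^\lambda$ times this into $\langle C_t^\lambda,\,r_{m(i,j)}{}_sC^\lambda\rangle_D\,{}_{s'}C_{t'}^\lambda\bmod\hat A_D^\lambda$. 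Applying $a_{i'}\phi_H^R(-)b_{j'}$ and using $a_{i'}\phi_H^R(\hat A_D^\lambda)b_{j'}\subseteq\hat A^{(D,\lambda)}$ (established in the proof of Theorem \ref{t3.1}), comparison with the defining relation yields the first equality $\langle X,Y\rangle=\langle\phi_j(X),r_{m(i,j)}\phi_i(Y)\rangle_D$. Grouping instead as $\left({}_{s'}C_t^\lambda\,r_{m(i,j)}\right)\cdot{}_sC_{t'}^\lambda$ and using cell property (ii) for $A_D$ produces the second equality.

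The hard part will be the bookkeeping of the two distinct reductions --- modulo $\hat A_D^\lambda$ inside $A_D$ versus modulo $\hat A^{(D,\lambda)}$ inside $A$ --- and checking that applying $a_{i'}\phi_H^R(-)b_{j'}$ carries $\hat A_D^\lambda$ into $\hat A^{(D,\lambda)}$, so that no higher-level terms contaminate the coefficient of ${}_{(R',s')}C_{(L',t')}^{(D,\lambda)}$. I would also take care with the notational point that the two displayed forms correspond to sliding the central factor $r_{m(i,j)}$ onto the $Y$-side (the left cell action on $\phi_i(Y)$) or onto the $X$-side (the right cell action on $\phi_j(X)$), the equality of the two then being automatic since both compute the same coefficient $\langle X,Y\rangle$.
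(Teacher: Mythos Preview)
Your proposal is correct and follows essentially the same approach as the paper's proof: reduce to basis elements, read off $\langle X,Y\rangle$ from the defining product, use $L_jR_i\cap D=\emptyset$ for (a), and for (b) apply Lemma \ref{l4.1} followed by the bracket definition in $A_D$ together with the containment $a_{i'}\phi_H^R(\hat A_D^\lambda)b_{j'}\subseteq\hat A^{(D,\lambda)}$. Your explicit remark that the second equality in (b) comes from grouping $r_{m(i,j)}$ with ${}_{s'}C_t^\lambda$ via cell property (ii) is actually more careful than the paper, which simply asserts both equalities after deriving the first.
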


\begin{proof}
  It suffices to check (a) and (b) for basis elements $X,Y$, so assume $X = C_{(L_j ,t)}^{(D,\lambda )} \,,\,Y = \,_{(R_i ,s)} C^{(D,\lambda )} $ and put $x = _{(R_{i'} ,s')} C_{_{(L_j ,t)} }^{(D,\lambda )}  = a_{i'} \phi _H^R \left( {_{s'} C_t ^\lambda  } \right)b_j  \in L_j $ and $y = _{(R_i ,s)} C_{_{(L_{j'} ,t')} }^{(D,\lambda )}  = a_i \phi _H^R \left( {_s C_{t'} ^\lambda  } \right)b_{j'}  \in R_i $.  Then $xy = \left\langle {X,Y} \right\rangle \,_{(R_{i'} ,s')} C_{(L_{j'} ,t')}^{(D,\lambda )} \bmod \hat A^{(D,\lambda )} $.

If  $R_i \,,\,L_j $ are not matched, then $xy \in \hat A^D  \subseteq \hat A^{(D,\lambda )} $, so $\left\langle {X,Y} \right\rangle  = 0$, proving (a).

If  $R_i \,,\,L_j $ are matched, then, by lemma \ref{l4.1}, 
\[ \begin{aligned}
xy &= a_{i'} \phi _H^R \left( {_{s'} C_t ^\lambda   \cdot r_{m(i,j)}  \cdot \,_s C_{t'} ^\lambda  } \right)b_{j'}  \\
   &= a_{i'} \phi _H^R \left( {\left\langle {C_t^\lambda  ,r_{m(i,j)}  \cdot \,_s C^\lambda  } \right\rangle _D \,_{s'} C_{t'} ^\lambda  } \right)b_{j'} \bmod \hat A^{(D,\lambda )}  \hfill \\
    &= \left\langle {C_t^\lambda  ,r_{m(i,j)}  \cdot \,_s C^\lambda  } \right\rangle _D  \cdot a_{i'} \phi _H^R \left( {_{s'} C_{t'} ^\lambda  } \right)b_{j'} \bmod \hat A^{(D,\lambda )}  \hfill \\
    &= \left\langle {\phi _j (X),r_{m(i,j)}  \cdot \phi _i (Y)} \right\rangle _D  \cdot _{(R_{i'} ,s')} C_{(L_{j'} ,t')}^{(D,\lambda )} \bmod \hat A^{(D,\lambda )}  \hfill .
 \end{aligned}
 \]
This gives $\left\langle {X,Y} \right\rangle  = \left\langle {\phi _j (X),r_{m(i,j)} \phi _i \left( Y \right)} \right\rangle _D  = \left\langle {r_{m(i,j)} \phi _j (X),\phi _i \left( Y \right)} \right\rangle _D $, proving part (b).
\end{proof}

We note the following corollary for future use.

\begin{corollary}  \label{c4.0}

  Let $M$ be a finite monoid satisfying the $R$-C.A. condition and place the standard cell algebra structure on $R[M]$.  Then for any $\lambda  \in \Lambda _D \,,\,D \in \mathbb{D}$:
\begin{enumerate}
\item  If $\rad_D \left( {\,_L C^\lambda  } \right) \ne 0$, then $\rad\left( {\,_L C^{(D,\lambda )} } \right) \ne 0$,
\item  If $\rad_D \left( {D_R ^\lambda  } \right) \ne 0$, then $\rad\left( {D_R ^{(D,\lambda )} } \right) \ne 0$.
\end{enumerate}
\end{corollary}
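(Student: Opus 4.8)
The plan is to push a nonzero radical vector from the group-algebra cell module up to the monoid-algebra cell module by lifting it through the isomorphisms $\phi_i,\phi_j$ of the excerpt and checking, via Proposition \ref{p4.1}, that it still pairs to zero against everything. I will carry out part (1) in detail; part (2) is its mirror image with rows and columns, and left and right, interchanged.

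The one ingredient I need beyond the excerpt is that the form radical $\rad_D(\,_L C^\lambda)$ is an $R[G_D]$-submodule of $\,_L C^\lambda$; equivalently, the cell bracket is associative, $\langle x\cdot a,\,y\rangle_D=\langle x,\,a\cdot y\rangle_D$ for $a\in R[G_D]$. This is a general feature of cell algebras (it falls out of associativity of the triple product $\,_{s'} C_t^\lambda\cdot a\cdot\,_s C_{t'}^\lambda$ reduced modulo $\hat A_D^\lambda$, as in \cite{May}): indeed, if $y\in\rad_D(\,_L C^\lambda)$ then $\langle x,\,a y\rangle_D=\langle x a,\,y\rangle_D=0$ for every $x\in C_R^\lambda$, so $a y\in\rad_D(\,_L C^\lambda)$. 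In particular the radical absorbs the action of each $r_{m(i,j)}\in G_D$.

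Assume $\rad_D(\,_L C^\lambda)\neq 0$ and pick a nonzero $y_0$ in it. Fix any row index $i$ and set $Y=\phi_i^{-1}(y_0)\in(\,_L C^{(D,\lambda)})_i$; since $\phi_i$ is an isomorphism, $Y\neq 0$. I claim $Y\in\rad(\,_L C^{(D,\lambda)})$. Because $C_R^{(D,\lambda)}=\bigoplus_j(C_R^{(D,\lambda)})_j$, it suffices to show $\langle X,Y\rangle=0$ for $X\in(C_R^{(D,\lambda)})_j$, one column block at a time. If $R_i,L_j$ are unmatched, Proposition \ref{p4.1}(a) gives $\langle X,Y\rangle=0$ outright. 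If they are matched, Proposition \ref{p4.1}(b) gives $\langle X,Y\rangle=\langle\phi_j(X),\,r_{m(i,j)}\phi_i(Y)\rangle_D=\langle\phi_j(X),\,r_{m(i,j)}\,y_0\rangle_D$, which vanishes because $r_{m(i,j)}\,y_0$ still lies in the submodule $\rad_D(\,_L C^\lambda)$ while $\phi_j(X)\in C_R^\lambda$. Hence $\langle X,Y\rangle=0$ for all $X\in C_R^{(D,\lambda)}$, so $0\neq Y\in\rad(\,_L C^{(D,\lambda)})$, proving (1).

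The only real point — what I would flag as the \emph{crux} rather than a genuine obstacle — is the matched case: the twist $r_{m(i,j)}\in G_D$ produced by Proposition \ref{p4.1}(b) depends on the column $j$, so the single chosen vector $y_0$ must survive being hit by each of these group elements at once. This is exactly what the submodule property provides, and it is where the group structure of $G_D$ (every $r_{m(i,j)}$ landing in $R[G_D]$) is used. For part (2) — which I read as the dual statement for the right cell modules $C_R^\lambda$, $C_R^{(D,\lambda)}$ — I would run the mirror argument: take a nonzero $x_0\in\rad_D(C_R^\lambda)$, fix a column $j$, set $X=\phi_j^{-1}(x_0)\in(C_R^{(D,\lambda)})_j$, and use the second equality of Proposition \ref{p4.1}(b) together with the fact that $\rad_D(C_R^\lambda)$ is a right $R[G_D]$-submodule to conclude $\langle X,Y\rangle=0$ for every $Y$, whence $0\neq X\in\rad(C_R^{(D,\lambda)})$.
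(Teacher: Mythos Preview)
Your argument is correct and follows the same overall plan as the paper: pick a nonzero $y\in\rad_D(\,_L C^\lambda)$, lift it via $\phi_i^{-1}$ into the block $(\,_L C^{(D,\lambda)})_i$, and test against each column block using Proposition~\ref{p4.1}. The one difference is tactical. In the matched case you invoke the \emph{first} equality of Proposition~\ref{p4.1}(b), which lands the group element $r_{m(i,j)}$ on $y$ and forces you to argue that $\rad_D(\,_L C^\lambda)$ is an $R[G_D]$-submodule (your ``crux''). The paper instead uses the \emph{second} equality, $\langle X,Y\rangle=\langle r_{m(i,j)}\phi_j(X),\,\phi_i(Y)\rangle_D$, which throws $r_{m(i,j)}$ onto the $X$ side; since $\phi_i(Y)=y$ already sits in the radical it pairs to zero with $r_{m(i,j)}\phi_j(X)\in C_R^\lambda$ automatically, and no submodule argument is needed. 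Both routes are valid, but the paper's is one step shorter. Your reading of part~(2) as the dual statement for $C_R^\lambda$ matches how the paper treats it (``the proof of ii.\ is parallel'').
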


\begin{proof}
  Assume $\rad_D \left( {\,_L C^\lambda  } \right) \ne 0$ and take a $y \ne 0$ in $\rad_D \left( {\,_L C^\lambda  } \right)$.  Write $y = \sum\nolimits_{s \in L(\lambda )} {c(s) \cdot \,_s C^\lambda  } $ and put $Y = \sum\nolimits_{s \in L(\lambda )} {c(s)\,_{(R_i ,s)} C^{(D,\lambda )} }  \in \left( {\,_L C^{(D,\lambda )} } \right)_i  \subseteq \,_L C^{(D,\lambda )} $ for some $R_i  \subseteq D$.  Then $Y \ne 0$ and we claim $Y \in \rad\left( {\,_L C^{(D,\lambda )} } \right)$:  Take any $X \in \left( {C_R ^{(D,\lambda )} } \right)_j $ .  Then by the proposition, if  $R_i \,,\,L_j $ are not matched we have $\left\langle {X,Y} \right\rangle  = 0$, while if $R_i \,,\,L_j $ are matched, then \[\left\langle {X,Y} \right\rangle  = \left\langle {r_{m(i,j)} \phi _j (X),\phi _i \left( Y \right)} \right\rangle _D  = \left\langle {r_{m(i,j)} \phi _j (X),y} \right\rangle _D  = 0\]
   since $y \in \rad_D \left( {\,_L C^\lambda  } \right)$.  Then $\left\langle {X,Y} \right\rangle  = 0$ for any $X \in C_R ^{(D,\lambda )} $ and $Y \in \rad\left( {\,_L C^{(D,\lambda )} } \right)$ as claimed.  The proof of ii. is parallel.  
\end{proof}

Write $D^2  = \left\{ {xy:x,y \in D} \right\}$ and recall that $\hat A^D  = \mathop  \oplus \nolimits_{D' < D} R[D'] \subseteq \hat A^{(D,\lambda )} $ for any $\lambda  \in \Lambda _D $.

\begin{corollary}   \label{c4.2}
For any $\mathcal{D}$-class $D$,
\begin{enumerate}  [\upshape(a)]
\item   If $D^2  \subseteq \hat A^D $, then $(D,\lambda ) \notin \Lambda _0 $
 for any $\lambda  \in \Lambda _D $
\item   If $D^2  \not\subset \hat A^D $, then $(D,\lambda ) \in \Lambda _0  \Leftrightarrow \lambda  \in \left( {\Lambda _D } \right)_0 $.
\end{enumerate}
\end{corollary}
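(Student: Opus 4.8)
The statement I want to prove is Corollary 4.2, which characterizes when a pair $(D,\lambda)$ lies in $\Lambda_0$ in terms of the product $D^2$ and the set $(\Lambda_D)_0$ attached to the Schutzenberger group algebra $R[G_D]$. The natural strategy is to reduce everything to the bracket computation supplied by Proposition \ref{p4.1}, since membership in $\Lambda_0$ is by definition controlled by non-vanishing of $\langle -,-\rangle$ on the cell modules $C_R^{(D,\lambda)}$ and $\,_L C^{(D,\lambda)}$. The key reduction is that $\langle X,Y\rangle$ for $X\in (C_R^{(D,\lambda)})_j$ and $Y\in (\,_L C^{(D,\lambda)})_i$ either vanishes (when $R_i,L_j$ are unmatched) or equals a twisted bracket $\langle \phi_j(X), r_{m(i,j)}\phi_i(Y)\rangle_D$ in the group algebra. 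So the whole question becomes: can we find a matched pair $(i,j)$ together with group-algebra elements making this twisted bracket nonzero?

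**Part (a).** The plan is to show that $D^2\subseteq \hat A^D$ forces every pair $R_i,L_j$ to be unmatched. Indeed, $R_i,L_j$ matched means there exist $x\in L_j$, $y\in R_i$ with $xy\in D$; but $xy\in D^2$, and the matched elements $x,y$ lie in $D$, so $xy\in D^2\subseteq \hat A^D$ contradicts $xy\in D$ (as $\hat A^D=\oplus_{D'<D}R[D']$ contains no elements of $D$ itself). Hence no pair is matched, and by part (a) of Proposition \ref{p4.1} the bracket $\langle X,Y\rangle$ vanishes identically on all of $C_R^{(D,\lambda)}\times \,_L C^{(D,\lambda)}$. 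By the definition of $\Lambda_0$, this gives $(D,\lambda)\notin\Lambda_0$ for every $\lambda\in\Lambda_D$.

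**Part (b).** Here I would argue both directions using Proposition \ref{p4.1}(b). Assume first $D^2\not\subset\hat A^D$; then some matched pair $R_i,L_j$ exists, so that $m(i,j)\in RT(H)$ and $r_{m(i,j)}$ is an invertible element of the group $G_D$. For such a matched pair, the twisting by the invertible group element $r_{m(i,j)}$ is an isomorphism of the relevant modules, so the bracket $\langle \phi_j(X), r_{m(i,j)}\phi_i(Y)\rangle_D$ is nonzero for suitable $X,Y$ if and only if the underlying group-algebra bracket $\langle -,-\rangle_D$ is nonzero for suitable arguments — that is, if and only if $\lambda\in(\Lambda_D)_0$. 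Since $\phi_i,\phi_j$ are $R$-module isomorphisms and left multiplication by the invertible $r_{m(i,j)}$ permutes a basis of $\,_LC^\lambda$, the existence of $X,Y$ with $\langle X,Y\rangle\neq 0$ is equivalent to the existence of group-algebra elements with $\langle -,-\rangle_D\neq 0$. This yields $(D,\lambda)\in\Lambda_0 \Leftrightarrow \lambda\in(\Lambda_D)_0$.

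**The main obstacle.** The routine parts are the unmatched-case vanishing and the bookkeeping with $\phi_i,\phi_j$; the one point that genuinely needs care is the claim in (b) that twisting the second argument by the \emph{invertible} element $r_{m(i,j)}\in G_D$ does not change whether the group bracket can be made nonzero. The cleanest way to see this is to note that left multiplication by a unit $g=r_{m(i,j)}$ in $R[G_D]$ is a bijection of $R[G_D]$ onto itself carrying a basis to a basis, so $\{\,r_{m(i,j)}\cdot\,_sC^\lambda\,\}$ still spans the cell module $\,_LC^\lambda$; therefore $\langle C_t^\lambda, r_{m(i,j)}\cdot\,_sC^\lambda\rangle_D$ runs over the same set of values (up to a unit relabeling) as $\langle C_t^\lambda,\,_sC^\lambda\rangle_D$, and in particular one family is not identically zero precisely when the other is not. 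I would state this invariance explicitly as the crux of the argument, since everything else follows formally from Proposition \ref{p4.1} and the definitions of $\Lambda_0$ and $(\Lambda_D)_0$.
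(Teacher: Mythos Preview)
Your proposal is correct and follows essentially the same route as the paper's proof: both parts reduce to Proposition~\ref{p4.1}, with (a) using that $D^2\subseteq\hat A^D$ forces all pairs to be unmatched, and (b) using a single matched pair together with the invertibility of $r_{m(i,j)}\in G_D$ to pass back and forth between the brackets $\langle-,-\rangle$ and $\langle-,-\rangle_D$. The paper makes the $\Leftarrow$ direction of (b) explicit by writing $Y=\phi_i^{-1}\bigl(r_{m(i,j)}^{-1}\cdot\eta\bigr)$, which is exactly your invertibility argument unpacked; one small wording point is that left multiplication by $r_{m(i,j)}$ need not send the cell basis of $\,_LC^\lambda$ to itself, but it is an $R$-module automorphism of $\,_LC^\lambda$, and that surjectivity is all you need.
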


\begin{proof}
 $D^2  \subseteq \hat A^D $ if and only if no pair $R_i \,,\,L_j $ are matched. 

(a) $D^2  \subseteq \hat A^D $ implies no pair $R_i \,,\,L_j $ is matched, so by proposition \ref{p4.1} $\left\langle {X,Y} \right\rangle  = 0$ for every $X \in C_R^{(D,\lambda )} ,Y \in \,_L C_{}^{(D,\lambda )} $.  Then ${\text{rad}}\left( {C_R^{(D,\lambda )} } \right) = C_R^{(D,\lambda )} $ and  $(D,\lambda ) \notin \Lambda _0 $.

 (b)  If $D^2  \not\subset \hat A^D $ then $R_i \,,\,L_j $ are matched for at least one pair $i,j$.

For $ \Rightarrow $:  If $\left( {D,\lambda } \right) \in \Lambda _0 $, then  $\left\langle {X,Y} \right\rangle  \ne 0$ for some $X \in \left( {C_R ^{(D,\lambda )} } \right)_j \,,\,Y \in \left( {_L C^{(D,\lambda )} } \right)_i $, where $i,j$ must be matched.  Then by proposition \ref{p4.1}, $\left\langle {X,Y} \right\rangle  = \left\langle {\phi _j (X),r_{m(i,j)} \phi _i \left( Y \right)} \right\rangle _D  \ne 0$, where $\phi _j (X) \in C_R ^\lambda  \,,\,r_{m(i,j)} \phi _i (Y) \in \,_L C^\lambda  $.  So ${\text{rad}}_D \left( {C_R^\lambda  } \right) \ne C_R^\lambda  $ and $\lambda  \in \left( {\Lambda _D } \right)_0 $.

For $ \Leftarrow $:  If $\lambda  \in \left( {\Lambda _D } \right)_0 $, then there exist $\xi  \in C_R ^\lambda  \,,\,\eta  \in \,_L C^\lambda  $ with $\left\langle {\xi ,\eta } \right\rangle _D  \ne 0$.  Choose $i,j$ with $R_i \,,\,L_j $ matched and let $X = \phi _j ^{ - 1} (\xi ) \in \left( {C_R ^{(D,\lambda )} } \right)_j \,,\,Y = \phi _i ^{ - 1} (r_{m(i,j)} ^{ - 1}  \cdot \eta ) \in \left( {_L C^{(D,\lambda )} } \right)_i $.  (Note that $r_{m(i,j)}  \in G_D $, so $r_{m\left( {i,j} \right)} ^{ - 1}  \in G_D $ is well defined.)   Then by proposition \ref{p4.1}, $\left\langle {X,Y} \right\rangle  = \left\langle {\phi _j (X),r_{m(i,j)} \phi _i \left( Y \right)} \right\rangle _D  = \left\langle {\xi ,\eta } \right\rangle _D  \ne 0$.  Then ${\text{rad}}\left( {C_R^{(D,\lambda )} } \right) \ne C_R^{(D,\lambda )} $ and $(D,\lambda ) \in \Lambda _0 $.
\end{proof}

Consider the question of whether a monoid algebra $k[M]$ is quasi-hereditary (as defined by \cite{CPS}).  The following was proved in \cite{May}:

\begin{proposition}    \label{p4.2}
 Let $k$ be a field and $A$ be a cell algebra over $k$ such that $\Lambda _0  = \Lambda $.  Then $A$ is quasi-hereditary.
\end{proposition}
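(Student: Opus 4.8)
The plan is to produce a heredity chain for $A$ by repeatedly splitting off the cell layer of a maximal element of $\Lambda$ as a heredity ideal, inducting on $|\Lambda|$. Recall that a two-sided ideal $J$ of a finite-dimensional $k$-algebra $A$ is a \emph{heredity ideal} if $J^2=J$, $J\,\rad(A)\,J=0$, and $J$ is projective as a left $A$-module, and that $A$ is quasi-hereditary exactly when it admits a chain $0=J_0\subset J_1\subset\cdots\subset J_n=A$ with each $J_i/J_{i-1}$ a heredity ideal of $A/J_{i-1}$. Since $\Lambda$ and each $L(\lambda),R(\lambda)$ are finite, $A$ is finite-dimensional, so this framework applies.

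First I would fix a maximal element $\lambda\in\Lambda$, so that $\hat A^\lambda=0$ and the layer $J:=A^\lambda=\Span_k\{{}_sC_t^\lambda\}$ is a two-sided ideal on which the cell relations (i), (ii) hold \emph{exactly}, with no lower-order error term. Because $\Lambda_0=\Lambda$, the cell form is nonzero on the $\lambda$-layer, so I may pick $s_0\in L(\lambda),t_0\in R(\lambda)$ with $c:=\langle C_{t_0}^\lambda,{}_{s_0}C^\lambda\rangle\neq 0$ and set $e:=c^{-1}\,{}_{s_0}C_{t_0}^\lambda$. The defining product relation ${}_{s'}C_t^\lambda\,{}_sC_{t'}^\lambda=\langle C_t^\lambda,{}_sC^\lambda\rangle\,{}_{s'}C_{t'}^\lambda$ (exact here) yields $e^2=e$, and more generally ${}_sC_{t_0}^\lambda\cdot e={}_sC_{t_0}^\lambda$ and $e\cdot{}_{s_0}C_t^\lambda={}_{s_0}C_t^\lambda$ for all $s,t$. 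From these one reads off $\,{}_sC_t^\lambda=c^{-1}\,({}_sC_{t_0}^\lambda)\,e\,({}_{s_0}C_t^\lambda)$, whence $J=AeA$, while a short computation using relation (i) gives $eAe=k\,e\cong k$.

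The three heredity conditions then follow. We have $J^2=J$ since $e=e^3\in J^2$; and $J\,\rad(A)\,J=A\,(e\,\rad(A)\,e)\,A=0$, using $A\,\rad(A)\,A=\rad(A)$ together with the standard identity $e\,\rad(A)\,e=\rad(eAe)=\rad(k)=0$ for an idempotent in a finite-dimensional algebra. The crucial point is projectivity: relation (i), in which $r_L$ is independent of $t$, exhibits $J$ as the direct sum over $t\in R(\lambda)$ of the ``columns'' $\Span_k\{{}_sC_t^\lambda:s\in L(\lambda)\}$, each a left submodule isomorphic to the cell module ${}_LC^\lambda$; and the identity ${}_sC_{t_0}^\lambda\cdot e={}_sC_{t_0}^\lambda$ shows the $t_0$-column is exactly $Ae$. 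Hence ${}_LC^\lambda\cong Ae$ is a direct summand of ${}_AA$, so every column is projective and $J\cong(Ae)^{\,|R(\lambda)|}$ is a projective left $A$-module. Thus $J$ is a heredity ideal.

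Finally I would pass to $A/J$. Since $\lambda$ is maximal, the images of $\{{}_sC_t^\mu:\mu\in\Lambda\setminus\{\lambda\}\}$ form a cell basis making $A/J$ a cell algebra with poset $\Lambda\setminus\{\lambda\}$; reducing modulo $J$ deletes only $\lambda$-layer terms, which for $\mu\neq\lambda$ either already lay in $\hat A^\mu$ or were absent, so every surviving form is unchanged and $(\Lambda\setminus\{\lambda\})_0=\Lambda\setminus\{\lambda\}$. By induction on $|\Lambda|$ (base case $A/J=0$) the quotient $A/J$ is quasi-hereditary, and prepending the heredity ideal $J$ to a heredity chain of $A/J$ completes one for $A$. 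I expect the main obstacle to be the projectivity step—verifying that the nonvanishing of the form forces the $t_0$-column to coincide with the summand $Ae$ of $A$—with a secondary, more routine, point being to check that $A/J$ genuinely inherits a cell structure with all forms still nonzero so that the induction runs.
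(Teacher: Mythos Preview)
Your argument is correct. The induction on $|\Lambda|$, peeling off the layer $A^\lambda$ for a maximal $\lambda$ as a heredity ideal $AeA$ with $e$ built from a cell basis element with nonzero form value, is the standard route to this result; each of your verifications (idempotency of $e$, $J=AeA$, $eAe=ke$, projectivity via the column decomposition $J\cong (Ae)^{|R(\lambda)|}$, and the passage to $A/J$) goes through as you describe.

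However, there is nothing to compare against: the paper does not give its own proof of this proposition. It is stated with the preface ``The following was proved in \cite{May}'' and is simply quoted from that reference (where cell algebras, a.k.a.\ standardly based algebras in the sense of Du--Rui, are developed). So you have supplied a self-contained proof where the paper only cites one. Your argument is essentially the one found in the cited literature: it is the natural adaptation to the non-symmetric setting of the Graham--Lehrer/K\"onig--Xi proof that a cellular algebra with all cell forms nonzero is quasi-hereditary, and matches what Du--Rui do for standardly based algebras.
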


Then by corollary \ref{c4.2} we have 

\begin{corollary}   \label{c4.3}
  Let $k$ be a field and $M$ a finite monoid such that for every class $D \in \mathbb{D}$ , $D^2  \not\subset \hat A^D $ and the group algebra $k[G_D ]$ of the Schutzenberger group for $D$ is a cell algebra with $\left( {\Lambda _D } \right)_0  = \Lambda _D $.  Then $A = k[M]$ is quasi-hereditary.
\end{corollary}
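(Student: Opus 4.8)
The plan is to reduce the statement directly to Proposition \ref{p4.2} by verifying that the standard cell algebra structure on $A = k[M]$ has $\Lambda_0 = \Lambda$. First I would observe that the hypothesis ``the group algebra $k[G_D]$ is a cell algebra for every $D \in \mathbb{D}$'' is precisely the $k$-C.A. condition of Definition \ref{d3.2}. Hence Theorem \ref{t3.1} applies and equips $A = k[M]$ with its standard cell algebra structure, whose poset is $\Lambda = \{(D,\lambda) : D \in \mathbb{D},\, \lambda \in \Lambda_D\}$. Since $k$ is a field, Proposition \ref{p4.2} then tells us that it suffices to establish the single equality $\Lambda_0 = \Lambda$.

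Next I would prove $\Lambda_0 = \Lambda$ by fixing an arbitrary pair $(D,\lambda) \in \Lambda$ and showing $(D,\lambda) \in \Lambda_0$. This is where the two remaining hypotheses enter, and each feeds directly into Corollary \ref{c4.2}. Because $D^2 \not\subset \hat A^D$ for every $\mathcal{D}$-class $D$, part (b) of Corollary \ref{c4.2} is available and yields the equivalence $(D,\lambda) \in \Lambda_0 \Leftrightarrow \lambda \in (\Lambda_D)_0$. Since by assumption $(\Lambda_D)_0 = \Lambda_D$, every $\lambda \in \Lambda_D$ already lies in $(\Lambda_D)_0$, so the right-hand condition holds and therefore $(D,\lambda) \in \Lambda_0$. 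As $(D,\lambda)$ was arbitrary, this gives $\Lambda \subseteq \Lambda_0$, and since $\Lambda_0 \subseteq \Lambda$ always, we conclude $\Lambda_0 = \Lambda$. Invoking Proposition \ref{p4.2} then completes the argument.

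The proof is essentially a bookkeeping composition of three earlier results (Theorem \ref{t3.1}, Corollary \ref{c4.2}(b), and Proposition \ref{p4.2}), so I expect no genuine computational obstacle. The only point meriting care is that both per-class hypotheses, namely $D^2 \not\subset \hat A^D$ and $(\Lambda_D)_0 = \Lambda_D$, are assumed to hold for \emph{every} $D \in \mathbb{D}$ simultaneously; this is exactly what allows the equivalence of Corollary \ref{c4.2}(b) to be applied uniformly to all pairs $(D,\lambda)$ comprising $\Lambda$, rather than to some classes only. Were either hypothesis to fail for even one class $D$, part (a) of Corollary \ref{c4.2} (in the first case) or the strictness in part (b) (in the second) would leave some $(D,\lambda)$ outside $\Lambda_0$, breaking the equality $\Lambda_0 = \Lambda$ on which Proposition \ref{p4.2} depends.
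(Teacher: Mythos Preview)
Your proposal is correct and follows exactly the paper's intended route: the corollary is stated immediately after Proposition~\ref{p4.2} with the phrase ``Then by corollary~\ref{c4.2} we have,'' so the paper's implicit argument is precisely the composition of Theorem~\ref{t3.1}, Corollary~\ref{c4.2}(b), and Proposition~\ref{p4.2} that you have spelled out.
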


Recall that a semigroup $M$ is \emph{regular} if for every $x \in M$ there is a $y \in M$ such that $x = xyx$.  In a regular semigroup, every $\mathcal{D}$-class $D$ contains an idempotent $\rho $.  Then $\rho  = \rho ^2  \in D \cap D^2 $ , so $D^2  \not\subset \hat A^D $.  Thus

\begin{corollary}   \label{c4.4}
  Let $k$ be a field and $M$ a regular finite monoid such that for every class $D \in \mathbb{D}$ , the group algebra $k[G_D ]$ of the Schutzenberger group for $D$ is a cell algebra with $\left( {\Lambda _D } \right)_0  = \Lambda _D $.  Then $A = k[M]$ is quasi-hereditary.
\end{corollary}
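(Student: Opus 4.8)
The plan is to reduce the statement to Corollary \ref{c4.3}, which already delivers quasi-heredity once we know that $D^2 \not\subset \hat A^D$ holds for every $\mathcal{D}$-class $D$. Since the hypotheses on the group algebras $k[G_D]$ (namely that each is a cell algebra with $\left(\Lambda_D\right)_0 = \Lambda_D$) are identical in the two corollaries, the only thing left to verify is that regularity of $M$ forces $D^2 \not\subset \hat A^D$ for every $D \in \mathbb{D}$. This is precisely the gap bridged by the discussion preceding the statement.

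First I would show that every $\mathcal{D}$-class of a regular monoid contains an idempotent. Given $D \in \mathbb{D}$, pick any $x \in D$. By regularity there is a $y \in M$ with $x = xyx$; set $\rho = xy$. A one-line computation gives $\rho^2 = (xy)(xy) = (xyx)y = xy = \rho$, so $\rho$ is idempotent. Moreover $\rho = xy \in xM$ gives $\rho M \subseteq xM$, while $x = (xy)x = \rho x \in \rho M$ gives $xM \subseteq \rho M$; hence $xM = \rho M$, i.e. $x\,\mathcal{R}\,\rho$. In particular $\rho \in R_x \subseteq D_x = D$, so $\rho$ is an idempotent lying in $D$.

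With such a $\rho$ in hand, the conclusion $D^2 \not\subset \hat A^D$ is immediate: since $\rho = \rho \cdot \rho$ with both factors in $D$, we have $\rho \in D^2$; and since $\rho \in D$ while $\hat A^D = \mathop\oplus\nolimits_{D' < D} R[D']$ is spanned by elements of strictly smaller $\mathcal{D}$-classes, the basis element $\rho$ cannot lie in $\hat A^D$. Thus $D^2 \not\subset \hat A^D$.

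Having established $D^2 \not\subset \hat A^D$ for every $D \in \mathbb{D}$, all hypotheses of Corollary \ref{c4.3} are met, and that corollary yields at once that $A = k[M]$ is quasi-hereditary. There is no genuine obstacle here; the only mildly substantive point is producing the idempotent in each $\mathcal{D}$-class, which is the standard fact that in a regular semigroup every $\mathcal{D}$-class is a regular $\mathcal{D}$-class and hence contains an idempotent. Everything else is a direct appeal to the already-proved Corollary \ref{c4.3}.
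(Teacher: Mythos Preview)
Your proposal is correct and follows exactly the paper's approach: the paper's argument (given in the paragraph immediately preceding the corollary rather than in a separate proof environment) is that regularity gives an idempotent $\rho$ in each $\mathcal{D}$-class $D$, whence $\rho = \rho^2 \in D \cap D^2$ forces $D^2 \not\subset \hat A^D$, and then Corollary~\ref{c4.3} applies. You have simply fleshed out the standard fact that each $\mathcal{D}$-class of a regular semigroup contains an idempotent, which the paper takes for granted.
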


Note that in a regular semi-group the base $\mathcal{H}$-class $H$ for any class $D$ can be chosen to contain an idempotent.  Then $H$ is a (maximal) subgroup of $M$ with $H$ isomorphic to $G_D $.  So the condition in corollary \ref{c4.4} can be replaced by the requirement that for any maximal subgroup $G$ of $M$ the group algebra $k[G]$ must be a cell algebra with $\left( {\Lambda _G } \right)_0  = \Lambda _G $. 

As a special case we obtain the following result of Putcha \cite{Pu}.

\begin{corollary}   \label{c4.45}
  For any regular finite monoid $M$, the complex monoid algebra $A=\mathbb{C}[M]$ is quasi-hereditary.
\end{corollary}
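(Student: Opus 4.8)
The plan is to obtain this as the specialization $k = \mathbb{C}$ of Corollary \ref{c4.4}. Since $M$ is given to be a regular finite monoid, the only hypotheses of that corollary left to verify are that for every $\mathcal{D}$-class $D$ the group algebra $\mathbb{C}[G_D]$ is a cell algebra and that $(\Lambda_D)_0 = \Lambda_D$.

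For the cell algebra structure, I would invoke the discussion following Definition \ref{d3.2}: because $\mathbb{C}$ is algebraically closed of characteristic $0$, it is of good characteristic relative to $M$, so by Maschke's theorem each $\mathbb{C}[G_D]$ is split semisimple, hence isomorphic to a finite product $\prod_r M_{d_r}(\mathbb{C})$ of matrix algebras, and thereby carries a natural cellular (hence cell) basis. Concretely, I would take $\Lambda_D$ to be the (antichain) indexing set of the matrix factors, with $L_D(\lambda_r) = R_D(\lambda_r) = \{1, \dots, d_r\}$ and cell basis elements the matrix units $\,_s C_t^{\lambda_r} = E_{st}^{(r)}$.

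The remaining step is to check $(\Lambda_D)_0 = \Lambda_D$, and here the key computation is just the matrix-unit multiplication. For a fixed factor, $E_{s't}^{(r)} E_{st'}^{(r)} = \delta_{ts}\, E_{s't'}^{(r)}$, so the defining relation for the bracket $\langle -,-\rangle_D$ yields $\langle C_t^{\lambda_r}, \,_s C^{\lambda_r}\rangle_D = \delta_{ts}$. This form is the identity on the cell module, hence non-degenerate, so $D_R^{\lambda_r} \neq 0$ and $\lambda_r \in (\Lambda_D)_0$. As this holds for every factor $r$, we obtain $(\Lambda_D)_0 = \Lambda_D$, and Corollary \ref{c4.4} then applies to give the result.

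I do not expect a genuine obstacle: the entire content is the familiar fact that on the standard cell basis of a split semisimple algebra every cell bilinear form is non-degenerate, which is semisimplicity re-expressed in the language of cell algebras. The only point demanding mild care is to confirm that the bracket $\langle -,-\rangle_D$ computed from the matrix-unit basis agrees with the one used in Proposition \ref{p4.1} and Corollary \ref{c4.2}, so that the equivalence $\lambda \in (\Lambda_D)_0 \Leftrightarrow D_R^\lambda \neq 0$ is applied to the correct form.
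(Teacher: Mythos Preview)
Your proposal is correct and follows essentially the same route as the paper: both invoke Maschke's theorem to get each $\mathbb{C}[G_D]$ split semisimple with its matrix-unit cellular basis, and then apply Corollary~\ref{c4.4}. Your explicit computation $\langle C_t^{\lambda_r}, \,_s C^{\lambda_r}\rangle_D = \delta_{ts}$ to verify $(\Lambda_D)_0 = \Lambda_D$ is slightly more detailed than the paper's proof, which simply observes that a semisimple cell algebra is quasi-hereditary and hence satisfies the hypothesis of Corollary~\ref{c4.4}.
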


\begin{proof}  As remarked above, each $\mathbb{C}[G_D]$ will be (split) semisimple by Maschke's theorem.  As a product of complex matrix algebras it has a natural cellular basis and (being semisimple) is quasi-hereditary.  Each $\mathbb{C}[G_D]$ then satisfies the conditions of \ref{c4.4}, so $A$ is quasi-hereditary. 
\end{proof}

Now consider the question of semisimplicity for a monoid algebra $k[M]$.  The following criterion for semisimplicity of a cell algebra follows from results in \cite{May}.

\begin{proposition}    \label{p4.3}
  A cell algebra $A$ over a field $k$ is semisimple if and only if for every $\lambda  \in \Lambda $ we have $\,_L C^\lambda   = \,_L D^\lambda  $ and $C_R ^\lambda   = D_R ^\lambda  $.
\end{proposition}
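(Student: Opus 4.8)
The plan is to prove both implications simultaneously by a dimension count, comparing the size of the cell basis with $\dim_k(A/\rad A)$. First I would record an elementary reformulation of the hypothesis. Since $\,_L D^\lambda=\,_L C^\lambda/\rad(\,_L C^\lambda)$ and $D_R^\lambda=C_R^\lambda/\rad(C_R^\lambda)$, the two equalities $\,_L C^\lambda=\,_L D^\lambda$ and $C_R^\lambda=D_R^\lambda$ say precisely that both radicals vanish. Because the index sets $L(\lambda)$ and $R(\lambda)$ are nonempty, a vanishing radical also forces the bilinear form $\langle-,-\rangle$ to be nonzero at level $\lambda$, hence $\lambda\in\Lambda_0$. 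Thus the hypothesis is equivalent to the single statement that $\Lambda_0=\Lambda$ and every cell module is irreducible.

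Next I would assemble two dimension formulas. Counting the cell basis $C$ gives immediately
\[\dim_k A=\sum_{\lambda\in\Lambda}|L(\lambda)|\,|R(\lambda)|=\sum_{\lambda\in\Lambda}\dim_k(\,_L C^\lambda)\,\dim_k(C_R^\lambda).\]
For the other formula I would use Theorem \ref{t4.1}, which says that $\{\,_L D^\mu\}_{\mu\in\Lambda_0}$ and $\{D_R^\mu\}_{\mu\in\Lambda_0}$ are complete lists of the irreducible left and right $A$-modules. I would further invoke the structural fact from \cite{May} that the section $A^\lambda/\hat A^\lambda$ is isomorphic, as an $A$-bimodule, to $\,_L C^\lambda\otimes_k C_R^\lambda$; this forces each simple module to be split, i.e. $\End_A(\,_L D^\mu)=k$, and together with the elementary identity $\dim_k(\,_L D^\mu)=\dim_k(D_R^\mu)$ (both equal the rank of the $\mu$-form) it yields, via the Wedderburn decomposition $A/\rad A\cong\prod_{\mu\in\Lambda_0}M_{n_\mu}(k)$,
\[\dim_k(A/\rad A)=\sum_{\mu\in\Lambda_0}\dim_k(\,_L D^\mu)\,\dim_k(D_R^\mu).\]

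With both formulas in hand the comparison is a squeeze. Using $\dim_k(\,_L C^\lambda)\ge\dim_k(\,_L D^\lambda)$ and $\dim_k(C_R^\lambda)\ge\dim_k(D_R^\lambda)$, and the convention that $\dim_k(\,_L D^\lambda)=\dim_k(D_R^\lambda)=0$ when $\lambda\notin\Lambda_0$, I obtain
\[\dim_k A=\sum_{\lambda}\dim_k(\,_L C^\lambda)\dim_k(C_R^\lambda)\ \ge\ \sum_{\lambda}\dim_k(\,_L D^\lambda)\dim_k(D_R^\lambda)=\dim_k(A/\rad A).\]
Now $A$ is semisimple exactly when $\dim_k A=\dim_k(A/\rad A)$, that is, exactly when this inequality is an equality. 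Since each $\dim_k(\,_L C^\lambda)$ and $\dim_k(C_R^\lambda)$ is strictly positive, termwise equality of the products forces $\dim_k(\,_L C^\lambda)=\dim_k(\,_L D^\lambda)$ and $\dim_k(C_R^\lambda)=\dim_k(D_R^\lambda)$ for every $\lambda$ (and in particular $\dim_k(\,_L D^\lambda)>0$, so $\lambda\in\Lambda_0$). This is precisely the reformulated hypothesis, so both implications fall out at once.

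The step I expect to be the real obstacle is the second dimension formula, and in particular the claim that the simple modules are split over $k$, so that the Wedderburn blocks are full matrix algebras $M_{n_\mu}(k)$ rather than $M_{n_\mu}(E_\mu)$ for a larger division ring $E_\mu$. Without splitting one only gets $\dim_k(A/\rad A)=\sum_{\mu}\dim_k(\,_L D^\mu)\dim_k(D_R^\mu)/\dim_k E_\mu$, and the squeeze would spuriously appear to require all $E_\mu=k$ as an extra hypothesis; the proposition is correct as stated precisely because cellularity already forces this splitting (the same phenomenon that makes, for instance, $\mathbb{C}$ fail to be a cell algebra over $\mathbb{R}$). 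I would therefore derive $\End_A(\,_L D^\mu)=k$ from the bimodule isomorphism $A^\lambda/\hat A^\lambda\cong\,_L C^\lambda\otimes_k C_R^\lambda$ of \cite{May}, or cite the splitting result there directly. Finally, as a cross-check on the $(\Leftarrow)$ direction I would note the structural shortcut through Proposition \ref{p4.2}: the hypothesis gives $\Lambda_0=\Lambda$, so $A$ is quasi-hereditary, and a quasi-hereditary algebra all of whose standard (cell) modules are irreducible is forced to be semisimple.
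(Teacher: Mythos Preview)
Your argument is correct, and it is genuinely different from the paper's. The paper treats the two implications separately: for $(\Rightarrow)$ it uses that in a semisimple algebra each principal indecomposable $\,_L P^\lambda$ equals its simple head $\,_L D^\lambda$, combines this with the cell-filtration of $\,_L P^\lambda$ to get $\,_L C^\lambda=\,_L D^\lambda$ on $\Lambda_0$, and then runs a dimension count (comparing $\sum_{\Lambda_0}|L(\lambda)||R(\lambda)|$ against $\sum_{\Lambda}|L(\lambda)||R(\lambda)|$) to force $\Lambda_0=\Lambda$; for $(\Leftarrow)$ it invokes the decomposition-matrix machinery of \cite{May}, observing that the hypothesis makes $LD$ and $RD$ identity matrices, whence the Cartan relations $RC=LD^T\cdot RD$ and $LC=RD^T\cdot LD$ force the Cartan matrices to be identities, so every PIM is simple. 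Your route bypasses both the PIM filtrations and the Cartan formalism by going straight to the Wedderburn dimension of $A/\rad A$, and the termwise squeeze handles both directions at once. The trade-off is that your argument front-loads exactly the fact you flagged---absolute irreducibility of the $\,_L D^\mu$---whereas the paper only needs it in the forward direction (to read off multiplicities) and gets $(\Leftarrow)$ from the Cartan identity instead. Both proofs ultimately lean on the same imports from \cite{May}; yours just packages them more economically. Your closing cross-check via Proposition~\ref{p4.2} is also sound and gives a third, independent route to $(\Leftarrow)$.
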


\begin{proof}
  Assume $A$ is semisimple.  Then for every $\lambda  \in \Lambda _0 $ we have $\,_L P^\lambda   = \,_L D^\lambda  $, where $\,_L P^\lambda  $ is the principle indecomposable left module corresponding to the irreducible $\,_L D^\lambda  $.  But since  $\,_L P^\lambda  $ always has a filtration with $\,_L C^\lambda  $ as the ``top'' quotient, we must have $\,_L P^\lambda   = \,_L C^\lambda   = \,_L D^\lambda  $.  Since each $\,_L D^\lambda  $ is absolutely irreducible, the multiplicity of  $\,_L P^\lambda  $ as a direct summand in $A$ is just $\dim \left( {\,_L D^\lambda  } \right) = \dim \left( {\,_L C^\lambda  } \right) = \left| {L(\lambda )} \right|$.  So $\dim \left( A \right) = \sum\nolimits_{\lambda  \in \Lambda _0 } {\left| {L(\lambda )} \right|^2 } $.  Similarly, for every $\lambda  \in \Lambda _0 $ we have $P_R ^\lambda   = C_R ^\lambda   = D_R ^\lambda  $ (where $P_R ^\lambda  $ is the principle indecomposable right module corresponding to the irreducible $D_R ^\lambda  $) and $\dim \left( A \right) = \sum\nolimits_{\lambda  \in \Lambda _0 } {\left| {R(\lambda )} \right|^2 } $.  But the multiplicity of $\,_L P^\lambda  $ as a direct summand in $A$ and the multiplicity of $P_R ^\lambda  $ as a direct summand in $A$ must be equal (as both equal the number of primitive idempotents corresponding to $\lambda  \in \Lambda _0 $).  So $\left| {L(\lambda )} \right| = \left| {R(\lambda )} \right|$  and we can write $\dim \left( A \right) = \sum\nolimits_{\lambda  \in \Lambda _0 } {\left| {L(\lambda )} \right| \cdot \left| {R(\lambda )} \right|} $.  On the other hand, a direct count of basis elements $_s C_t ^\lambda  ,\,\lambda  \in \Lambda \,,\,s \in L(\lambda )\,,\,t \in R(\lambda )\,,$ shows that $\dim \left( A \right) = \sum\nolimits_{\lambda  \in \Lambda } {\left| {L(\lambda )} \right| \cdot \left| {R(\lambda )} \right|} . $  It follows that $\Lambda _0  = \Lambda $, so  $\,_L C^\lambda   = \,_L D^\lambda  $ and $C_R ^\lambda   = D_R ^\lambda  $ for every $\lambda  \in \Lambda $.

 Now assume $\,_L C^\lambda   = \,_L D^\lambda  $ and $C_R ^\lambda   = D_R ^\lambda  $ for every $\lambda  \in \Lambda $, so $\Lambda _0  = \Lambda $.  As in \cite{May}, for $\mu  \in \Lambda _0 \,,\,\lambda  \in \Lambda $, let $Rd_{\lambda \mu }  = [C_R ^\lambda  :D_R ^\mu  ]$ be the multiplicity of the irreducible $D_R ^\mu  $ as a composition factor in $C_R ^\lambda  $, let $Ld_{\lambda \mu }  = \left[ {\,_L C^\lambda  :\,_L D^\mu  } \right]$ be the multiplicity of $\,_L D^\mu  $ in $\,_L C^\lambda  $, and write $RD = \left( {Rd_{\lambda \mu } } \right)\,,\,\,\,LD = \left( {Ld_{\lambda \mu } } \right)$ for the decomposition matrices of $A$.  Then $RD$ and $LD$ are both square identity matrices.   The right Cantor matrix, $RC$, and left Cantor matrix, $LC$, are the square $\left| {\Lambda _0 } \right| \times \left| {\Lambda _0 } \right|$ matrices where for $\lambda \,,\,\mu  \in \Lambda _0 $, $RC_{\lambda \mu }  = \left[ {P_R ^\lambda  :D_R ^\mu  } \right]$ and $LC_{\lambda \mu }  = \left[ {\,_L P^\lambda  :\,_L D^\mu  } \right]$.   As shown in \cite{May}, we have $RC = LD^T  \cdot RD$ and $LC = RD^T  \cdot LD$ (where $T$ denotes the transpose matrix).  Then $RC$ and $LC$ are also identity matrices and we must have $P_R ^\lambda   = D_R ^\lambda  $ and $\,_L P^\lambda   = \,_L D^\lambda  $ for every $\lambda  \in \Lambda  = \Lambda _0 $.  Since $A$ is a direct sum of principle indecomposable left modules isomorphic to the various$\,_L P^\lambda   = \,_L D^\lambda  $, it is a direct sum of irreducible modules and therefore semisimple.  
\end{proof}

Consider a finite monoid $M$ satisfying the $R$-C.A. condition and place the standard cell algebra structure on $M$.

\begin{corollary}  \label{c4.41}
If $k[M]$ is semisimple, then $k[G_D]$ is semisimple for every $D \in \mathbb{D} $.
\end{corollary}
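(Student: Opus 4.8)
The plan is to prove the contrapositive by way of the semisimplicity criterion in Proposition \ref{p4.3} together with the radical-transfer result in Corollary \ref{c4.0}. Assume $k[G_D]$ is \emph{not} semisimple for some $\mathcal{D}$-class $D$; I want to conclude that $k[M]$ is not semisimple. By Proposition \ref{p4.3} applied to the cell algebra $A_D = k[G_D]$, failure of semisimplicity means that for some $\lambda \in \Lambda_D$ we do \emph{not} have both $\,_L C^\lambda = \,_L D^\lambda$ and $C_R^\lambda = D_R^\lambda$; that is, at least one of the radicals $\rad_D(\,_L C^\lambda)$ or $\rad_D(C_R^\lambda)$ is nonzero.

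The key step is then to push this nonzero radical up to $k[M]$. Suppose, say, $\rad_D(\,_L C^\lambda) \neq 0$ (the other case is symmetric, noting that $D_R^\lambda = C_R^\lambda/\rad(C_R^\lambda)$, so $C_R^\lambda = D_R^\lambda$ is equivalent to $\rad_D(C_R^\lambda) = 0$). By Corollary \ref{c4.0}(1), this gives $\rad(\,_L C^{(D,\lambda)}) \neq 0$ in the cell module for $A = k[M]$. Hence $\,_L C^{(D,\lambda)} \neq \,_L D^{(D,\lambda)}$, so by Proposition \ref{p4.3} applied to the cell algebra $A = k[M]$, the algebra $k[M]$ cannot be semisimple. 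This is exactly the contrapositive of the desired statement.

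The main thing to be careful about is the direction of Corollary \ref{c4.0}: it transfers a nonzero radical \emph{from} the Schutzenberger group algebra $k[G_D]$ \emph{up to} the full monoid algebra $k[M]$, which is precisely what is needed here — we start from non-semisimplicity of some $k[G_D]$ and deduce a defect in $k[M]$. There is no subtlety about matched versus unmatched rows/columns in this direction, since Corollary \ref{c4.0} already absorbs that analysis: for any fixed $\lambda$ one simply selects a single row index $i$ (any $R_i \subseteq D$) and lifts the radical vector $y$ to $Y$ supported in that row, and orthogonality against all of $C_R^{(D,\lambda)}$ follows from Proposition \ref{p4.1} regardless of matching. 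I expect no genuine obstacle; the only point requiring attention is invoking Proposition \ref{p4.3} in both directions (once for $A_D$ to extract the radical, once for $A$ to conclude), and correctly translating the two radical conditions into the single ``$\,_L C = \,_L D$ and $C_R = D_R$'' form that the proposition uses.
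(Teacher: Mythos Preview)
Your proposal is correct and follows essentially the same route as the paper's proof: argue the contrapositive, use Proposition~\ref{p4.3} on $k[G_D]$ to obtain a nonzero radical $\rad_D(\,_L C^\lambda)$ (or its right-handed analogue), invoke Corollary~\ref{c4.0} to lift it to a nonzero $\rad(\,_L C^{(D,\lambda)})$, and then apply Proposition~\ref{p4.3} to $k[M]$. Your additional remarks on how Corollary~\ref{c4.0} handles the matched/unmatched analysis internally are accurate but not needed for the argument itself.
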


\begin{proof}
If $k[G_D]$ is not semisimple for some $D$, then by the proposition either $\rad_D \left( {\,_L C^\lambda  } \right) \ne 0$ or $\rad_D \left( {D_R ^\lambda  } \right) \ne 0$.  Assume $\rad_D \left( {\,_L C^\lambda  } \right) \ne 0$ (if $\rad_D \left( {D_R ^\lambda  } \right) \ne 0$ the proof is similar). Then by corollary \ref{c4.0} we have $\rad\left( {\,_L C^{(D,\lambda )} } \right) \ne 0$.  But then $k[M]$ is not semisimple by proposition \ref{p4.3}. 
\end{proof}

If all the algebras $k[G_D ]$ are semisimple, we would like a condition guaranteeing that $k[M]$ itself is semisimple.
For any $\mathcal{D}$-class $D$, let $\mathbb{L}(D)$ be the set of all $\mathcal{L}$-classes contained in $D$ and $\mathbb{R}(D)$ be the set of all $\mathcal{R}$-classes contained in $D$.  Define the ``bijection condition'' on $D$:

\textbf{Bijection condition:}  There exists a bijection $F:\mathbb{L}(D) \to \mathbb{R}(D)$ such that $L$ and $F(L)$ are matched while $L$ and $R$ are not matched if $R \ne F(L)$.  

\begin{proposition}    \label{p4.4}
  Let $k$ be a field and $M$ a monoid such that for every $\mathcal{D}$-class $D$
\begin{enumerate}
\item   $k[G_D ]$ is a semisimple cell algebra and
\item   the bijection condition is satisfied for $D$.
\end{enumerate}
Then $k[M]$ is a semisimple cell algebra.
\end{proposition}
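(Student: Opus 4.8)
The plan is to reduce the semisimplicity of $A = k[M]$ to the criterion of Proposition~\ref{p4.3} and then push everything down to the group level. First, condition~(1) says each $k[G_D]$ is in particular a cell algebra, so the $k$-C.A. condition holds and Theorem~\ref{t3.1} makes $A$ a cell algebra with its standard structure; Proposition~\ref{p4.3} then tells me it suffices to prove $\rad(\,_L C^{(D,\lambda)}) = 0$ and $\rad(C_R^{(D,\lambda)}) = 0$ for every $(D,\lambda) \in \Lambda$. Applying the same proposition to the \emph{semisimple} algebra $A_D = k[G_D]$ furnishes the corresponding vanishing at the group level: $\rad_D(\,_L C^\lambda) = 0$ and $\rad_D(C_R^\lambda) = 0$ for all $\lambda \in \Lambda_D$.

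Fix $(D,\lambda)$ and take $Y \in \rad(\,_L C^{(D,\lambda)})$. I would decompose $Y = \sum_i Y_i$ along the row decomposition $\,_L C^{(D,\lambda)} = \oplus_i (\,_L C^{(D,\lambda)})_i$ and show each $Y_i = 0$. The bijection condition supplies $F : \mathbb{L}(D) \to \mathbb{R}(D)$ matching each column $L_j$ with the single row $R_{i(j)}$, where $R_{i(j)} = F(L_j)$, and with no other row. Pairing $Y$ against an arbitrary element $X$ of one column component $(C_R^{(D,\lambda)})_j$ and invoking Proposition~\ref{p4.1}, every cross term with $i \neq i(j)$ drops out by the unmatched case, leaving $\langle X, Y \rangle = \langle X, Y_{i(j)} \rangle = \langle \phi_j(X), r_{m(i(j),j)} \phi_{i(j)}(Y_{i(j)}) \rangle_D$. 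Since $\phi_j$ maps $(C_R^{(D,\lambda)})_j$ isomorphically onto $C_R^\lambda$ and this must vanish for all such $X$, the element $r_{m(i(j),j)} \phi_{i(j)}(Y_{i(j)})$ lies in $\rad_D(\,_L C^\lambda) = 0$; as $r_{m(i(j),j)} \in G_D$ and $\phi_{i(j)}$ are invertible, $Y_{i(j)} = 0$.

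Because $F$ is a bijection, $i(j)$ runs over all rows as $j$ runs over all columns, so every $Y_i$ vanishes and $Y = 0$, giving $\rad(\,_L C^{(D,\lambda)}) = 0$. The right-hand radical is handled in parallel, using instead the second equality of Proposition~\ref{p4.1}(b), the inverse matching $j(i) = F^{-1}(R_i)$, and $\rad_D(C_R^\lambda) = 0$. With both radicals trivial for every $(D,\lambda)$, Proposition~\ref{p4.3} delivers that $A = k[M]$ is semisimple.

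I expect the only genuine subtlety to be the role of the bijection condition. Without it a single column might be matched to several rows, so that the pairing $\langle X, Y \rangle$ would couple several components $Y_i$ simultaneously and could not be split into independent group-algebra conditions. The bijection condition is exactly what block-diagonalizes the cell form row-against-column, reducing the vanishing of the radicals of $A$ to the assumed vanishing of the radicals of each $k[G_D]$. Everything else is the bookkeeping of Proposition~\ref{p4.1} together with the invertibility of the translation elements $r_{m(i,j)}$ in the Schutzenberger group.
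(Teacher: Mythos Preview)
Your proof is correct and follows essentially the same route as the paper's: reduce to the radical criterion of Proposition~\ref{p4.3}, decompose $Y$ by rows, use Proposition~\ref{p4.1} together with the bijection condition to isolate a single row component for each column test vector, and push the vanishing down to $\rad_D(\,_L C^\lambda)=0$ via the invertibility of $r_{m(i,j)}\in G_D$. Your explicit observation that bijectivity of $F$ is what lets $i(j)$ range over all rows is in fact slightly more careful than the paper's phrasing.
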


\begin{proof}
  Place the standard cell algebra structure on $k[M]$ as in theorem \ref{t3.1}.  We will use proposition \ref{p4.3} to show semisimplicity.  Take $\left( {D,\lambda } \right) \in \Lambda $.  We will show ${\text{rad}}\left( {\,_L C^{(D,\lambda )} } \right) = 0$ and therefore $\,_L C^{(D,\lambda )}  = \,_L D^{(D,\lambda )} $.  Take any $Y = \sum\nolimits_k {Y_k }  \in {\text{rad}}\left( {\,_L C^{(D,\lambda )} } \right)$ where $Y_k  \in \left( {_L C^{(D,\lambda )} } \right)_k $ .  Then $\left\langle {X,Y} \right\rangle  = 0$ for any $X \in C_R ^{(D,\lambda )} $.  If $X \in \left( {C_R ^{(D,\lambda )} } \right)_j $, then $\left\langle {X,Y_k } \right\rangle  = 0$ whenever $L_j  \ne F(R_k )$ by proposition \ref{p4.1}.  Then we must also have $\left\langle {X,Y_k } \right\rangle  = 0$ when $L_j  = F(R_k )$.  But when $L_j ,R_k $ are matched, proposition \ref{p4.1} gives $\left\langle {X,Y_k } \right\rangle  = \left\langle {\phi _j (X),r_{m(i,j)} \phi _i \left( {Y_k } \right)} \right\rangle _D  = 0$.  Since $\phi _j (X)$ is an arbitrary basis element in $C_R ^\lambda  $, we have $r_{m(i,j)} \phi _i \left( {Y_k } \right) \in {\text{rad}}_D \left( {\,_L C^\lambda  } \right)$.  But since $k[G_D ]$ is semisimple, ${\text{rad}}_D \left( {_L C^\lambda  } \right) = 0$ by proposition \ref{p4.3}.  Then $r_{m(i,j)} \phi _i \left( {Y_k } \right) = 0$, and since $r_{m\left( {i,j} \right)}  \in G_D $ is invertible, $Y_k  = 0$.  Since this is true for each $k$, we have $Y = 0$.  So ${\text{rad}}\left( {_L C^{(D,\lambda )} } \right) = 0$.  A parallel argument shows that ${\text{rad}}\left( {C_R ^{(D,\lambda )} } \right) = 0$, so $k[M]$ is semisimple by proposition \ref{p4.3}.
\end{proof}

Recall that an \emph{inverse} of an element $a$ in a semi-group is an element $a^{ - 1} $ such that $aa^{ - 1} a = a$ and $a^{ - 1} aa^{ - 1}  = a^{ - 1} $.  An \emph{inverse semi-group} is a semi-group in which each element has a unique inverse.  A standard result in semi-group theory is that any inverse semi-group satisfies the bijection condition.  (In fact each $\mathcal{L}$-class and each $\mathcal{R}$-class contains a unique idempotent.  Given an $\mathcal{L}$-class $L$, the unique matching $\mathcal{R}$-class $R = F(L)$ is the class containing the same idempotent as $L$.)  Then proposition \ref{p4.4} and corollary \ref{c4.41} yield

\begin{corollary}    \label{c4.5}
  Let $k$ be a field and $M$ a finite monoid which is an inverse semi-group and satisfies the $k$-C.A. condition.  Then $k[M]$ is semisimple if and only if $k[G_D]$ is semisimple for every $\mathcal{D}$-class $D$.   
\end{corollary}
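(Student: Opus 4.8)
The plan is to prove the two directions of the biconditional separately, each by invoking a result already established in the excerpt, so that essentially no new computation is required. The forward implication is immediate. Since $M$ satisfies the $k$-C.A. condition, we may equip $k[M]$ with its standard cell algebra structure; Corollary \ref{c4.41} then applies verbatim and tells us that if $k[M]$ is semisimple, then $k[G_D]$ must be semisimple for every $\mathcal{D}$-class $D$. No further argument is needed for this half.

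For the reverse implication I would check that the hypotheses of Proposition \ref{p4.4} are met for every $\mathcal{D}$-class $D$. Condition (1) of that proposition---that each $k[G_D]$ is a semisimple cell algebra---is precisely the combination of our standing $k$-C.A. assumption with the semisimplicity hypothesis we are now assuming. The only remaining task is condition (2), the bijection condition for each $D$. Here I would appeal to the standard structure theory of inverse semigroups recorded just before the corollary: in an inverse semigroup every $\mathcal{L}$-class and every $\mathcal{R}$-class contains a \emph{unique} idempotent, and the assignment sending an $\mathcal{L}$-class $L$ to the $\mathcal{R}$-class $F(L)$ containing the same idempotent defines a bijection $F:\mathbb{L}(D)\to\mathbb{R}(D)$. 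One then observes that $L$ and $F(L)$ are matched, since their common idempotent $e=e^2\in D$ witnesses $L\cdot F(L)\cap D\neq\emptyset$ via the product $e\cdot e=e$, while $L$ and $R$ are unmatched whenever $R\neq F(L)$. With the bijection condition in hand, Proposition \ref{p4.4} yields that $k[M]$ is a semisimple cell algebra, completing the reverse direction.

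The only point requiring genuine care is the verification of the bijection condition, and within it the unmatched claim: that $L$ and $R$ fail to match whenever $R$ does not contain the idempotent $\mathcal{L}$-related to $L$. I expect this to follow from the fact that in an inverse semigroup a product $xy$ (with $x\in L$, $y\in R$) remains in $D$ exactly when the idempotent associated to $R$ coincides with the idempotent associated to $L$, so that uniqueness of idempotents in each class forces $R=F(L)$. Since the excerpt already states as a standard fact that any inverse semi-group satisfies the bijection condition, I would cite this rather than reprove it, and the corollary then follows simply by combining Corollary \ref{c4.41} with Proposition \ref{p4.4}.
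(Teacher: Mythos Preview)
Your proposal is correct and matches the paper's own argument exactly: the paper simply observes that inverse semigroups satisfy the bijection condition (stated just before the corollary) and then says the result follows from Proposition~\ref{p4.4} and Corollary~\ref{c4.41}. Your added discussion of why the bijection condition holds is more detail than the paper gives, but the structure of the proof is identical.
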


For an inverse semi-group the base $\mathcal{H}$-class $H$ for any class $D$ can be chosen to be a (maximal) subgroup of $M$ with $H$ isomorphic to $G_D $.  So the condition in corollary \ref{c4.5} can be replaced by the requirement that for any maximal subgroup $G$ of $M$ the group algebra $k[G]$ must be a semisimple cell algebra.

It is well known that if the finite monoid $M$ is an inverse semi-group and the field $k$ has characteristic not dividing the order of any $G_D$, then $k[M]$ is semisimple (see e.g. \cite{CP}, which cites \cite{Og}).  In this case each $k[G_D]$ is semisimple by Maschke's theorem.  If $k$ is also algebraically closed, then as remarked above, $M$ satisfies the $k$-C.A. condition.  Thus corollary \ref{c4.5} yields the semisimplicity of $k[M]$ under the additional hypothesis of algebraic closure for $k$.

\section{Twisted monoid algebras}

A \emph{twisting} on a monoid $M$ (with values in a commutative domain $R$ with unit $1$) is a map $\pi :M \times M \to R$ such that (i) for all $x,y,z \in M$, \[\pi \left( {x,y} \right)\pi \left( {xy,z} \right) = \pi \left( {x,yz} \right)\pi \left( {y,z} \right)\] and   (ii) for all $x \in M$, \[\pi \left( {x,id} \right) = 1 = \pi \left( {id,x} \right)\] (where $id$ is the identity in $M$).

Given a twisting $\pi $ on $M$, define an algebra $R^\pi  \left[ M \right]$ to be the free $R$-module with basis $M$ and multiplication $x \circ y = \pi (x,y)\,xy$ for $x,y \in M$.  Then $R^\pi  \left[ M \right]$ is an associative $R$-algebra with unit $1$.  
     
We would like conditions under which $R^\pi  \left[ M \right]$ will be a cell algebra.  In \cite{Wil} and \cite{GX}, Wilcox and Guo and Xi have investigated when $R^\pi  \left[ M \right]$ can be a cellular algebra.  Much of the difficulty in their analyses involves defining the involution anti-isomorphism $ * $ required for a cellular algebra.  Since cell algebras don't require such a map, the corresponding results are both simpler to obtain and of more general applicability.  We require one ``compatibility'' condition for our twisting:

\begin{definition}    \label{d5.1}
 A twisting $\pi $ on a monoid $M$ is \emph{compatible} if for all $a,x \in M$
\begin{enumerate} 
\item  If  $ax \mathcal{D} x$ , then $\pi \left( {a,y} \right) = \pi \left( {a,x} \right)$ whenever $y \in H_x $,
\item If  $xa\mathcal{D}x$ , then $\pi \left( {y,a} \right) = \pi \left( {x,a} \right)$ whenever $y \in H_x $.
\end{enumerate}
\end{definition}

In \cite{GL}, $\pi $ is defined to be an $\mathcal{LR}$-twisting if $x\mathcal{L}y \Rightarrow \pi \left( {x,z} \right) = \pi \left( {y,z} \right)$ and $y\mathcal{R}z \Rightarrow \pi \left( {x,y} \right) = \pi \left( {x,z} \right)$ for all $x,y,z \in M$.  Clearly any $\mathcal{LR}$-twisting is compatible.

\begin{theorem}    \label{t5.1}

Let $M$ be a finite monoid satisfying the $R$-C.A. condition.  Let $\pi $ be a compatible twisting on $M$.   Then $R^\pi  \left[ M \right]$ has a cell algebra structure with the same $\Lambda ,R,L$ and cell basis $C$ as for the standard cell algebra structure on $R[M]$.
\end{theorem}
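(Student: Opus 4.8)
The plan is to show that the very basis $C$ that makes the untwisted algebra $R[M]$ a cell algebra still satisfies the two cell conditions of Definition \ref{d3.1} for the twisted product $\circ$. Since $R^\pi[M]$ and $R[M]$ have the same underlying free $R$-module (basis $M$), the set $C$ is automatically a free $R$-basis and the submodules $A^{(D,\lambda)},\hat A^{(D,\lambda)}$, being $R$-spans of fixed subsets of $C$, are literally unchanged. Thus only the multiplicative conditions (i) and (ii) need checking, and by $R$-linearity it suffices to treat $a = m$, a basis monoid element.

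The engine of the proof is to reduce each twisted product $m \circ x$, with $x \in R[{}_iH_j]$, to the untwisted product already analyzed in Corollaries \ref{c2.2} and \ref{c2.1}. For condition (i), fix an $\mathcal{H}$-class ${}_iH_j \subseteq D$ and apply Corollary \ref{c2.2}. In the first alternative $m \cdot R[{}_iH_j] \subseteq \hat A^D$, so each untwisted product $mh$ lies in a strictly lower $\mathcal{D}$-class; then $m \circ h = \pi(m,h)\,mh \in \hat A^D \subseteq \hat A^{(D,\lambda)}$, and condition (i) is met with all coefficients zero. In the second alternative $m \cdot {}_iH_j = {}_kH_j \subseteq D$, so every $h \in {}_iH_j$ satisfies $mh\,\mathcal{D}\,h$; the compatibility condition then makes $\pi(m,\cdot)$ constant on ${}_iH_j$, and writing $\pi_m$ for this constant we get, for $x = \sum_h c_h h \in R[{}_iH_j]$, $m \circ x = \sum_h c_h \pi(m,h)\,mh = \pi_m\,(mx)$. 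Substituting the untwisted expansion from the proof of Theorem \ref{t3.1} then yields $m \circ {}_{(R,s)}C_{(L,t)}^{(D,\lambda)} = \sum_{s'}(\pi_m r_L)\,{}_{(R',s')}C_{(L,t)}^{(D,\lambda)} \bmod \hat A^{(D,\lambda)}$. The dual reduction, via Corollary \ref{c2.1} and the second clause of Definition \ref{d5.1}, handles condition (ii).

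The delicate point is the independence of the coefficient $\pi_m r_L$ from the spectator right index $(L,t)$. The untwisted $r_L$ is already independent of $(L,t)$, so everything hinges on $\pi_m$. But $\pi_m$ is attached to the $\mathcal{H}$-class ${}_iH_j$, and as the spectator index ranges over the columns of the fixed row $i$ the classes ${}_iH_j$ sweep out the entire $\mathcal{R}$-class $R_i$; condition (i) therefore demands that $\pi_m$ be the same for all of these columns, i.e. that $\pi(m,\cdot)$ be constant not merely on one $\mathcal{H}$-class but across the whole $\mathcal{R}$-class $R_i$ (and, dually for condition (ii), that $\pi(\cdot,m)$ be constant across the $\mathcal{L}$-class swept out by the left index). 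I expect verifying this uniformity to be the main obstacle, and it is precisely here that the compatibility hypothesis must carry the argument: one has to check that the relation $mh\,\mathcal{D}\,h$, holding for every $h$ throughout row $i$, upgrades the $\mathcal{H}$-class constancy of $\pi(m,\cdot)$ to constancy over the full $\mathcal{R}$-class. Once that is secured, $\pi_m r_L$ inherits the independence from $(L,t)$ enjoyed by $r_L$ in the untwisted case, the new coefficients satisfy the cell conditions verbatim, and $C$ is a cell basis for $R^\pi[M]$.
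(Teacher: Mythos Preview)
Your approach is exactly the paper's: reduce to the untwisted case via Corollaries~\ref{c2.1} and~\ref{c2.2}, observe that in the second alternative the twisted product $m \circ x$ equals a constant $c$ (your $\pi_m$) times the untwisted product $mx$, and then scale the already-established cell coefficients by $c$. The paper carries this out in a few lines without further elaboration.

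The issue you single out as ``the main obstacle''---that $\pi(m,\cdot)$ must be constant across the entire $\mathcal{R}$-class $R_i$, not merely on one $\mathcal{H}$-class ${}_iH_j$, so that the scaled coefficient $\pi_m\, r_L$ is genuinely independent of the right index $(L,t)$---is \emph{not addressed} in the paper's own proof. The paper simply sets $c = \pi(a,x)$ for one $x \in H = L \cap R$, invokes compatibility to get constancy of $c$ over that single $\mathcal{H}$-class $H$, and then declares the new coefficients to be $c$ times the old ones, never checking that $c$ is the same when $L$ is replaced by a different column. Since Definition~\ref{d5.1} as written gives only $\mathcal{H}$-class constancy, your concern is legitimate: you have spotted a point on which your analysis is more careful than the paper's, though neither argument, as it stands, explains why (or whether) the cocycle identity together with compatibility upgrades $\mathcal{H}$-class constancy to constancy over the full $\mathcal{R}$-class.
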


\begin{proof}
  Since as $R$-modules $R^\pi  \left[ M \right]$ and $R[M]$ are identical, $C$ is an $R$-basis for $R^\pi  \left[ M \right]$ and we need only check that conditions (i) and (ii) for a cell algebra are satisfied for the new multiplication.

  For (i), assume $a$ is a basis element, $a \in M$, and take any $_{(R,s)} C_{(L,t)}^{(D,\lambda )}  \in C$.  Then $_{(R,s)} C_{(L,t)}^{(D,\lambda )}  \in R[H]$ is an $R$-linear combination of elements in an $\mathcal{H}$-class $H = L \cap R \subseteq D$.  By corollary \ref{c2.2} we have two possibilities: I.  $a \cdot R\left[ H \right] \subseteq \hat A^D  = \mathop  \oplus \nolimits_{D' < D} R\left[ {D'} \right]$ or II.  $a \cdot R\left[ H \right] \subseteq D$ .   If I. holds, $a \cdot _{(R,s)} C_{(L,t)}^{(D,\lambda )}  \subseteq \hat A^D  \subseteq \hat A^{(D,\lambda )} $ and we can take the coefficients $r_L $ required in (i) to be all $0$.  So assume II. holds.  Take an element $x \in H$ and let $c = \pi (a,x)$.  Since $ax \in D = D_x $
, compatibility gives $\pi \left( {a,y} \right) = c$ for any $y \in H$.  But then, by linearity, $a \circ _{(R,s)} C_{(L,t)}^{(D,\lambda )}  = c \cdot a \cdot _{(R,s)} C_{(L,t)}^{(D,\lambda )} $.  We can then take the coefficients $r_L $ required in (i) to be just $c$ times the corresponding coefficients in the cell algebra $R[M]$.  

The proof of condition (ii) is parallel.  
\end{proof}

Let $M$ be a finite monoid satisfying the $R$-C.A. condition and place the standard cell algebra structure on $M$.  Suppose $R_i \,,\,L_j $ are matched classes in a $\mathcal{D}$-class $D$ of $M$.  Then by definition there exist $x \in L_j \,,\,y \in R_i $ such that $xy \in D$.  If $\pi $ is a compatible twisting on $M$ define $c(j,i) = \pi \left( {x,y} \right)$.  Then $\pi \left( {a,b} \right) = c(j,i)$ for any $a \in L_j \,,\,b \in R_i $, so for any $X \in R[L_j ]\,,\,Y \in R[R_i ]$ we have $X \circ Y = c(j,i)XY$.  It follows that for $X \in \left( {C_R ^{(D,\lambda )} } \right)_j \,,\,Y \in \left( {_L C^{(D,\lambda )} } \right)_i $ we have $\left\langle {X,Y} \right\rangle ^\pi   = c(j,i)\left\langle {X,Y} \right\rangle $ where $\left\langle {X,Y} \right\rangle ^\pi  $ is the bracket in the cell algebra $R^\pi  \left[ M \right]$.  

\begin{definition}   \label{d5.2}
  A twisting $\pi $ on a monoid $M$ is \emph{strongly compatible} if for all $a,x \in M$
\begin{enumerate} 
\item  If  $ax \mathcal{D} x$ , then $\pi \left( {a,y} \right) = \pi \left( {a,x} \right) \ne 0$
 whenever $y \in H_x $,
\item If  $xa \mathcal{D} x$ , then $\pi \left( {y,a} \right) = \pi \left( {x,a} \right) \ne 0$
 whenever $y \in H_x $.
\end{enumerate}
\end{definition}

So for a strongly compatible twisting we have $c(j,i) \ne 0$ whenever $R_i \,,\,L_j $ are matched classes.  But $c(j,i) \ne 0$ in the domain $R$ yields $\left\langle {X,Y} \right\rangle ^\pi   = 0 \Leftrightarrow \left\langle {X,Y} \right\rangle  = 0$.  Using this observation, it is easy to modify the proofs to obtain the following generalizations to twisted monoid algebras of the results in section 4.

In the following, assume $R$ is a domain, $M$ a finite monoid satisfying the $R$-C.A. condition, and $\pi $ a strongly compatible twisting from $M$ to $\pi $.  Put the standard cell algebra structures on $R[M]$ and $R^\pi[M]$ as given by theorems \ref{t3.1} and \ref{t5.1}.  Let $\Lambda ,\Lambda _0 ,\left\langle { - , - } \right\rangle  , \rad$, etc., refer to the cell algebra $R[M]$ and $\Lambda ^\pi  ,\Lambda ^\pi  _0 ,\left\langle { - , - } \right\rangle ^\pi \rad^\pi$, etc., refer to the cell algebra $R^\pi  (M)$.
Recall that $\Lambda ^\pi = \Lambda $. 
  
\begin{proposition}  \label{p5.1}
For any $\lambda  \in \Lambda _D \,,\,D \in \mathbb{D}$:
\begin{enumerate}
\item  If $\rad_D \left( {\,_L C^\lambda  } \right) \ne 0$, then $\rad^\pi \left( {\,_L C^{(D,\lambda )} } \right) \ne 0$,
\item  If $\rad_D \left( {D_R ^\lambda  } \right) \ne 0$, then $\rad^\pi \left( {D_R ^{(D,\lambda )} } \right) \ne 0$.
\end{enumerate} 
\end{proposition}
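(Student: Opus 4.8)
The plan is to transplant the proof of Corollary \ref{c4.0} essentially verbatim, with the untwisted bracket $\langle -,-\rangle$ replaced by the twisted bracket $\langle -,-\rangle^\pi$ throughout, and with the identity
$\langle X,Y\rangle^\pi = c(j,i)\langle X,Y\rangle$ (for $X \in \left(C_R^{(D,\lambda)}\right)_j$, $Y \in \left(\,_L C^{(D,\lambda)}\right)_i$ with $R_i,L_j$ matched) established just above Definition \ref{d5.2} taking the place of the formula from Proposition \ref{p4.1}. I would prove (1) in detail and note that (2) follows by the parallel argument for right modules. The governing idea, inherited from Corollary \ref{c4.0}, is to produce a nonzero radical element that is \emph{concentrated in a single row component}, so that the twist scalar simply factors out of the entire bracket.

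Concretely, I would assume $\rad_D\left(\,_L C^\lambda\right) \neq 0$ and fix a nonzero $y = \sum_{s \in L_D(\lambda)} c(s)\,_s C^\lambda \in \rad_D\left(\,_L C^\lambda\right)$. Choosing any row $R_i \subseteq D$, I lift $y$ to $Y = \sum_{s \in L_D(\lambda)} c(s)\,_{(R_i,s)} C^{(D,\lambda)} \in \left(\,_L C^{(D,\lambda)}\right)_i \subseteq \,_L C^{(D,\lambda)}$. Since $\phi_i$ is an $R$-module isomorphism with $\phi_i(Y) = y$, the element $Y$ is nonzero. I then claim $Y \in \rad^\pi\left(\,_L C^{(D,\lambda)}\right)$, i.e. $\langle X,Y\rangle^\pi = 0$ for all $X$; by bilinearity it suffices to test homogeneous $X \in \left(C_R^{(D,\lambda)}\right)_j$. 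If $R_i,L_j$ are not matched, then each basis product $xy$ with $x\in L_j$, $y\in R_i$ lies below $D$, and since twisting only rescales $xy$ by $\pi(x,y)$ it preserves the $\mathcal{D}$-grading, so $X\circ Y \in \hat A^D \subseteq \hat A^{(D,\lambda)}$ and $\langle X,Y\rangle^\pi = 0$. If $R_i,L_j$ are matched, the twisted bracket identity together with Proposition \ref{p4.1} gives
\[
\langle X,Y\rangle^\pi = c(j,i)\,\langle X,Y\rangle = c(j,i)\,\langle r_{m(i,j)}\phi_j(X),\phi_i(Y)\rangle_D = c(j,i)\,\langle r_{m(i,j)}\phi_j(X),y\rangle_D = 0,
\]
since $y \in \rad_D\left(\,_L C^\lambda\right)$. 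Hence $0 \neq Y \in \rad^\pi\left(\,_L C^{(D,\lambda)}\right)$, proving (1); interchanging the roles of rows and columns proves (2).

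I do not anticipate a genuine obstacle. The only things needing care are the index bookkeeping between the column index $j$ of $X$ and the row index $i$ of $Y$, and the point that because $Y$ sits in the single component $\left(\,_L C^{(D,\lambda)}\right)_i$, the scalar $c(j,i)$ multiplies the whole bracket rather than entangling contributions from several matched rows (which is exactly why one does \emph{not} try to prove the full set equality $\rad^\pi = \rad$ directly). It is worth remarking that this particular implication does not even require strong compatibility: the untwisted bracket already vanishes, so $\langle X,Y\rangle^\pi = c(j,i)\cdot 0 = 0$ whether or not $c(j,i)$ is a unit. The nonvanishing of $c(j,i)$ supplied by strong compatibility becomes essential only for converse-type statements, where one must transport a nonzero bracket in $R^\pi[M]$ back to a nonzero bracket in $R[G_D]$.
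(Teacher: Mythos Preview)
Your proposal is correct and is exactly the modification of the proof of Corollary~\ref{c4.0} that the paper intends: lift a nonzero $y\in\rad_D(\,_L C^\lambda)$ to a single row component and use $\langle X,Y\rangle^\pi=c(j,i)\langle X,Y\rangle$ (together with the unmatched case landing in $\hat A^D$) to conclude. Your side remark that strong compatibility is not actually needed for this implication is also valid and worth keeping.
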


\begin{proposition}  \label{p5.2}
For any $\mathcal{D}$-class $D$,
\begin{enumerate}  [\upshape(a)]
\item   If $D^2  \subseteq \hat A^D $, then $(D,\lambda ) \notin \Lambda^\pi _0 $ for any $\lambda  \in \Lambda _D $
\item   If $D^2  \not\subset \hat A^D $, then $(D,\lambda ) \in \Lambda^\pi _0  \Leftrightarrow \lambda  \in \left( {\Lambda _D } \right)_0 $.
\end{enumerate}
\end{proposition}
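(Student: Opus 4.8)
The plan is to mirror the proof of Corollary \ref{c4.2} verbatim, feeding into it the single new input recorded just before Definition \ref{d5.2}: for matched classes $R_i, L_j$ the twisted bracket satisfies $\langle X, Y\rangle^\pi = c(j,i)\langle X, Y\rangle$ for $X \in \left(C_R^{(D,\lambda)}\right)_j$ and $Y \in \left({}_L C^{(D,\lambda)}\right)_i$, together with the fact that strong compatibility forces $c(j,i) \ne 0$.

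First I would record the key equivalence. Since $\pi$ is strongly compatible, $c(j,i) \ne 0$ whenever $R_i, L_j$ are matched, and because $R$ is a domain the identity $\langle X, Y\rangle^\pi = c(j,i)\langle X, Y\rangle$ gives $\langle X, Y\rangle^\pi = 0 \Leftrightarrow \langle X, Y\rangle = 0$ for all such $X, Y$. I would then dispose of the unmatched case separately: the multiplication in $R^\pi[M]$ agrees with that in $R[M]$ up to the scalar factors $\pi(x,y)$, so each product $x \circ y = \pi(x,y)\,xy$ lies in the same $\mathcal{D}$-graded summand as $xy$. Consequently, when $R_i, L_j$ are unmatched the product still falls into $\hat A^D \subseteq \hat A^{(D,\lambda)}$, and $\langle X, Y\rangle^\pi = 0$ exactly as in the untwisted setting. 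Thus on both matched and unmatched components the twisted bracket vanishes precisely when the ordinary bracket does.

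With these two observations the argument of Corollary \ref{c4.2} transfers directly. For part (a), $D^2 \subseteq \hat A^D$ holds if and only if no pair $R_i, L_j$ is matched; then every $\langle X, Y\rangle^\pi$ vanishes, so $\mathrm{rad}^\pi\left(C_R^{(D,\lambda)}\right) = C_R^{(D,\lambda)}$ and $(D,\lambda) \notin \Lambda^\pi_0$. For part (b), assume $D^2 \not\subset \hat A^D$, so some pair is matched. In the forward direction a nonzero $\langle X, Y\rangle^\pi$ must arise from a matched pair $i, j$, whence $\langle X, Y\rangle \ne 0$ and Proposition \ref{p4.1}(b) identifies this with $\langle \phi_j(X), r_{m(i,j)}\phi_i(Y)\rangle_D \ne 0$, giving $\lambda \in (\Lambda_D)_0$. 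In the reverse direction I would pick matched $R_i, L_j$ and reuse the exact choice $X = \phi_j^{-1}(\xi)$, $Y = \phi_i^{-1}\!\left(r_{m(i,j)}^{-1}\eta\right)$ from Corollary \ref{c4.2}, which yields $\langle X, Y\rangle = \langle \xi, \eta\rangle_D \ne 0$; multiplying by the nonzero factor $c(j,i)$ in the domain $R$ gives $\langle X, Y\rangle^\pi \ne 0$, so $(D,\lambda) \in \Lambda^\pi_0$.

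I expect no genuine obstacle here, since all the analytic content already resides in Proposition \ref{p4.1} and the scalar identity. The one point meriting explicit care — rather than difficulty — is the unmatched case: I must confirm that the twisting neither creates new matchings nor relocates a product into a different $\mathcal{D}$-class. This is immediate from $x \circ y = \pi(x,y)\,xy$, but it is the sole place where I rely on the actual multiplicative structure of $R^\pi[M]$ instead of quoting the matched-class identity, so I would state it explicitly rather than leave it implicit.
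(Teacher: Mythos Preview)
Your proposal is correct and follows exactly the approach the paper intends: the paper does not write out a separate proof for Proposition~\ref{p5.2} but simply remarks that the observation $\langle X,Y\rangle^\pi = c(j,i)\langle X,Y\rangle$ with $c(j,i)\ne 0$ (for matched $R_i,L_j$) makes it ``easy to modify the proofs'' of the Section~4 results, and you have carried out precisely that modification of Corollary~\ref{c4.2}. Your explicit treatment of the unmatched case is a welcome bit of care that the paper leaves implicit.
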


\begin{proposition}  \label{p5.3}
 Let $R = k$ be a field. Assume that for every class $D \in \mathbb{D}$ , $D^2  \not\subset \hat A^D $ and $k[G_D ]$ is a cell algebra with $\left( {\Lambda _D } \right)_0  = \Lambda _D $.  Then $A^\pi = k^\pi[M]$ is quasi-hereditary.
\end{proposition}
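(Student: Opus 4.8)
The plan is to reduce Proposition \ref{p5.3} to the untwisted quasi-hereditary result, Corollary \ref{c4.3}, by exploiting the fact that $R^\pi[M]$ and $R[M]$ share the same cell data $\Lambda, L, R$ and cell basis $C$ (Theorem \ref{t5.1}), together with the bridge lemmas Proposition \ref{p5.1} and Proposition \ref{p5.2} that were just stated for the twisted setting. Since Proposition \ref{p4.2} tells us that a cell algebra over a field is quasi-hereditary precisely when $\Lambda_0 = \Lambda$, the entire task is to verify that $\Lambda^\pi_0 = \Lambda$ under the stated hypotheses.

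First I would invoke Theorem \ref{t5.1} to observe that $A^\pi = k^\pi[M]$ is a cell algebra with poset $\Lambda^\pi = \Lambda$ and cell basis $C$. Next, I would fix an arbitrary element $(D,\lambda) \in \Lambda$, so $D \in \mathbb{D}$ and $\lambda \in \Lambda_D$. By hypothesis we have $D^2 \not\subset \hat A^D$ for every such $D$, so part (b) of Proposition \ref{p5.2} applies and gives the equivalence $(D,\lambda) \in \Lambda^\pi_0 \Leftrightarrow \lambda \in (\Lambda_D)_0$. The second hypothesis is precisely that $(\Lambda_D)_0 = \Lambda_D$, and since $\lambda \in \Lambda_D$ this forces $\lambda \in (\Lambda_D)_0$, hence $(D,\lambda) \in \Lambda^\pi_0$. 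As $(D,\lambda)$ was arbitrary, $\Lambda^\pi_0 = \Lambda$.

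Finally I would apply Proposition \ref{p4.2} to the cell algebra $A^\pi$ over the field $k$: with $\Lambda^\pi_0 = \Lambda^\pi$ established, the proposition yields that $A^\pi$ is quasi-hereditary, completing the argument. In effect the proof is the twisted mirror of the derivation of Corollary \ref{c4.3} from Corollary \ref{c4.2} and Proposition \ref{p4.2}, with Proposition \ref{p5.2} playing the role that Corollary \ref{c4.2} played in the untwisted case.

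The only genuine subtlety, and the step I expect to require the most care, is the legitimacy of applying Proposition \ref{p5.2}(b): that proposition is stated for a strongly compatible twisting, and its proof rests on the fact that the twisting scalars $c(j,i)$ attached to matched pairs $R_i, L_j$ are nonzero units in the domain $k$, so that $\langle X,Y\rangle^\pi = c(j,i)\langle X,Y\rangle$ vanishes if and only if $\langle X,Y\rangle$ does. Thus I would be careful to note that the standing assumptions in force for this block of propositions include that $\pi$ is \emph{strongly} compatible, which is exactly what guarantees $\Lambda^\pi_0$ agrees with $\Lambda_0$ class-by-class and lets the untwisted computation of matched pairs carry over verbatim. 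Once that observation is in place, everything else is a formal substitution and no further calculation is needed.
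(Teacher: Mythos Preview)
Your proposal is correct and follows essentially the same approach as the paper: the paper does not give an explicit proof of Proposition~\ref{p5.3} but states that it (along with Propositions~\ref{p5.1}--\ref{p5.7}) is obtained by modifying the corresponding Section~4 arguments using the observation that $\langle X,Y\rangle^\pi=0 \Leftrightarrow \langle X,Y\rangle=0$ for strongly compatible $\pi$, which is precisely what you do by invoking Proposition~\ref{p5.2}(b) in place of Corollary~\ref{c4.2}(b) and then applying Proposition~\ref{p4.2}. Your remark that the standing hypothesis of strong compatibility is what makes Proposition~\ref{p5.2}(b) available is exactly the point the paper is making in the paragraph preceding these propositions.
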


\begin{proposition}  \label{p5.4}
  Let $R =k $ be a field.  Assume that $M$ is regular and that for every class $D \in \mathbb{D}$ , $k[G_D ]$ is a cell algebra with $\left( {\Lambda _D } \right)_0  = \Lambda _D $.  Then $A^\pi = k^\pi[M]$ is quasi-hereditary.
\end{proposition}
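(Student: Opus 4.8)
The plan is to recognize that this proposition stands to Proposition \ref{p5.3} exactly as Corollary \ref{c4.4} stands to Corollary \ref{c4.3}: the regularity hypothesis is used only to supply, for each $\mathcal{D}$-class $D$, a witness that $D^2 \not\subset \hat{A}^D$, after which Proposition \ref{p5.3} applies without change.

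First I would invoke the standard fact (already used in the text just before Corollary \ref{c4.4}) that in a regular semigroup every $\mathcal{D}$-class $D$ contains an idempotent $\rho$. Since $\rho = \rho^2$ is a product of two elements of $D$ that again lies in $D$, we have $\rho \in D \cap D^2$, and therefore $D^2 \not\subset \hat{A}^D$. It is worth emphasizing at this step that $D^2 = \left\{ xy : x,y \in D \right\}$ is formed using the original monoid multiplication, not the twisted product $\circ$; consequently both the membership $\rho \in D^2$ and the inclusion question ``$D^2 \subseteq \hat{A}^D$?'' are entirely insensitive to the twisting $\pi$. The set $D^2$ and the $R$-submodule $\hat{A}^D$ spanned by the lower $\mathcal{D}$-classes coincide in $R[M]$ and in $R^\pi[M]$, so the passage from the untwisted to the twisted setting changes nothing here.

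With $D^2 \not\subset \hat{A}^D$ secured for every $D$, and with the standing hypothesis that each $k[G_D]$ is a cell algebra satisfying $\left(\Lambda_D\right)_0 = \Lambda_D$, all hypotheses of Proposition \ref{p5.3} are in force, and I would conclude that $A^\pi = k^\pi[M]$ is quasi-hereditary. I do not expect any genuine obstacle: the substance lives entirely in Proposition \ref{p5.3} (and in the twisted analog Proposition \ref{p5.2} of Corollary \ref{c4.2} on which it rests), while the present statement is a direct specialization obtained by feeding in the idempotent produced by regularity.
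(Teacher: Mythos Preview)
Your proposal is correct and matches the paper's intended argument. The paper does not give a separate proof of Proposition~\ref{p5.4}; it simply lists Propositions~\ref{p5.1}--\ref{p5.7} as the twisted analogues of the Section~4 results, so the implied proof is exactly the one you wrote: regularity furnishes an idempotent $\rho=\rho^2\in D$, whence $D^2\not\subset\hat A^D$, and then Proposition~\ref{p5.3} applies---precisely mirroring the passage from Corollary~\ref{c4.3} to Corollary~\ref{c4.4}. Your remark that $D^2$ is formed with the monoid product (so the twisting is irrelevant at that step) is a helpful clarification the paper leaves implicit.
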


\begin{proposition}  \label{p5.5}
If $R = k$ is a field and $k^\pi[M]$ is semi-simple, then $k[G_D]$ is semi-simple for every $D \in \mathbb{D} $.
\end{proposition}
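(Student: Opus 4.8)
The plan is to argue by contraposition, transcribing almost verbatim the proof of Corollary \ref{c4.41} but replacing the ordinary radical and bracket by their twisted counterparts and invoking Proposition \ref{p5.1} in place of Corollary \ref{c4.0}. Suppose $k[G_D]$ fails to be semisimple for some $\mathcal{D}$-class $D$. Since $k[G_D]$ is a cell algebra over the field $k$ by the $k$-C.A. hypothesis, Proposition \ref{p4.3} applies to it: semisimplicity of $k[G_D]$ is equivalent to $\,_L C^\lambda = \,_L D^\lambda$ and $C_R^\lambda = D_R^\lambda$ for every $\lambda \in \Lambda_D$, i.e. to $\rad_D(\,_L C^\lambda) = 0$ and $\rad_D(D_R^\lambda) = 0$ for all such $\lambda$. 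Hence the failure of semisimplicity yields some $\lambda \in \Lambda_D$ with $\rad_D(\,_L C^\lambda) \ne 0$ or $\rad_D(D_R^\lambda) \ne 0$.

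In the first case I would apply part (1) of Proposition \ref{p5.1} to obtain $\rad^\pi(\,_L C^{(D,\lambda)}) \ne 0$, so that $\,_L C^{(D,\lambda)} \ne \,_L D^{(D,\lambda)}$ in the cell algebra $k^\pi[M]$; in the second case I would instead use part (2) of Proposition \ref{p5.1} to get $\rad^\pi(D_R^{(D,\lambda)}) \ne 0$, so that $C_R^{(D,\lambda)} \ne D_R^{(D,\lambda)}$. Either way the semisimplicity criterion of Proposition \ref{p4.3}, now applied to $k^\pi[M]$ (a cell algebra by Theorem \ref{t5.1}), fails for the index $(D,\lambda) \in \Lambda = \Lambda^\pi$, so $k^\pi[M]$ is not semisimple. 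This is precisely the contrapositive of the stated implication.

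The substantive work has already been packaged into Proposition \ref{p5.1}, whose proof rests on strong compatibility through the relation $\langle X,Y\rangle^\pi = c(j,i)\langle X,Y\rangle$ with $c(j,i) \ne 0$ for matched classes $R_i, L_j$; this forces $\langle X,Y\rangle^\pi = 0 \Leftrightarrow \langle X,Y\rangle = 0$, so a nonzero element of the untwisted radical of a $G_D$-cell module lifts to a nonzero element of the corresponding twisted radical, exactly as in the proof of Corollary \ref{c4.0}. Consequently there is no genuine obstacle here: the only point requiring attention is the routine verification that both $k[G_D]$ and $k^\pi[M]$ are honest cell algebras over $k$, so that Proposition \ref{p4.3} legitimately governs the semisimplicity of each. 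Granting that, the present statement is an immediate consequence of Propositions \ref{p4.3} and \ref{p5.1}.
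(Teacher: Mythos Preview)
Your proposal is correct and matches the paper's intended argument: the paper does not give a separate proof of Proposition~\ref{p5.5} but states that the Section~4 proofs carry over verbatim once one uses the relation $\langle X,Y\rangle^\pi = c(j,i)\langle X,Y\rangle$ with $c(j,i)\ne 0$, which is precisely your replacement of Corollary~\ref{c4.0} by Proposition~\ref{p5.1} inside the proof of Corollary~\ref{c4.41}.
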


\begin{proposition}  \label{p5.6}
Let $R = k$ be a field.  Assume that for every $\mathcal{D}$-class $D$
\begin{enumerate}
\item   the cell algebra $k[G_D ]$ is a semi-simple and
\item   the bijection condition is satisfied for $D$.
\end{enumerate}
Then the cell algebra $k^\pi[M]$ is semi-simple.
\end{proposition}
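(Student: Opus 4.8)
The plan is to imitate the proof of Proposition \ref{p4.4} essentially verbatim, carrying the twist through the bilinear form and invoking strong compatibility exactly once, at the single point where a nonzero scalar must be cancelled. First I would place the standard cell algebra structure on $k^\pi[M]$ via Theorem \ref{t5.1}, and then, using the semisimplicity criterion of Proposition \ref{p4.3} (applicable since $\Lambda^\pi = \Lambda$), reduce the claim to showing that $\rad^\pi\left({}_L C^{(D,\lambda)}\right) = 0$ and $\rad^\pi\left(C_R^{(D,\lambda)}\right) = 0$ for every $(D,\lambda) \in \Lambda$.

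For the left radical I would take $Y = \sum_k Y_k \in \rad^\pi\left({}_L C^{(D,\lambda)}\right)$ with $Y_k \in \left({}_L C^{(D,\lambda)}\right)_k$, so that $\left\langle X, Y\right\rangle^\pi = 0$ for every $X \in C_R^{(D,\lambda)}$. Fixing $X \in \left(C_R^{(D,\lambda)}\right)_j$, Proposition \ref{p4.1}(a) together with the relation $\left\langle X, Y_k\right\rangle^\pi = c(j,k)\left\langle X, Y_k\right\rangle$ shows that the contribution of every $Y_k$ whose row $R_k$ is unmatched with $L_j$ vanishes. By the bijection condition there is a unique row index, say $k(j)$ with $R_{k(j)} = F(L_j)$, for which $R_{k(j)}, L_j$ are matched, so that $\left\langle X, Y\right\rangle^\pi = \left\langle X, Y_{k(j)}\right\rangle^\pi = c(j,k(j))\left\langle X, Y_{k(j)}\right\rangle$. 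Here strong compatibility (Definition \ref{d5.2}) enters, and only here: it gives $c(j,k(j)) \ne 0$ in the domain $k$, so I may conclude $\left\langle X, Y_{k(j)}\right\rangle = 0$ for all $X \in \left(C_R^{(D,\lambda)}\right)_j$.

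From this point the untwisted argument of Proposition \ref{p4.4} applies without change: Proposition \ref{p4.1}(b) rewrites the condition as $\left\langle \phi_j(X), r_{m(k(j),j)}\phi_{k(j)}(Y_{k(j)})\right\rangle_D = 0$ for every basis element $\phi_j(X)$ of $C_R^\lambda$, whence $r_{m(k(j),j)}\phi_{k(j)}(Y_{k(j)}) \in \rad_D\left({}_L C^\lambda\right)$; semisimplicity of $k[G_D]$ forces this radical to be $0$ by Proposition \ref{p4.3}, and invertibility of $r_{m(k(j),j)} \in G_D$ then gives $Y_{k(j)} = 0$. Since $F$ is a bijection every row index arises as some $k(j)$, so all $Y_k$ vanish and $Y = 0$; the parallel computation yields $\rad^\pi\left(C_R^{(D,\lambda)}\right) = 0$, and Proposition \ref{p4.3} finishes the proof.

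The one step needing genuine care is the bookkeeping that isolates a \emph{single} matched pair for each column. Without the bijection condition, vanishing of $\left\langle X, Y\right\rangle^\pi$ would only supply a linear relation among several brackets $\left\langle X, Y_k\right\rangle$, whose distinct nonzero twisting scalars $c(j,k)$ could not be cleared term by term; the bijection condition is precisely what reduces each such relation to a single nonzero scalar multiple, after which strong compatibility does all the work. Everything else is formally identical to the untwisted case, which is why the modification is routine once this isolation is in place.
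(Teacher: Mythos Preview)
Your proposal is correct and is exactly the modification the paper has in mind: it does not prove Proposition~\ref{p5.6} explicitly but, after recording that strong compatibility gives $\langle X,Y\rangle^\pi = c(j,i)\langle X,Y\rangle$ with $c(j,i)\neq 0$ on matched pairs, simply says it is easy to modify the proof of Proposition~\ref{p4.4}; your argument carries out that modification line by line. One small cleanup: for unmatched $R_k,L_j$ there is no scalar $c(j,k)$ to invoke, so the vanishing of $\langle X,Y_k\rangle^\pi$ there should be justified directly (the product $x\circ y=\pi(x,y)\,xy$ still lands in $\hat A^D$), rather than via the relation $\langle X,Y_k\rangle^\pi=c(j,k)\langle X,Y_k\rangle$.
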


\begin{proposition}  \label{p5.7}
  Let $R = k$ be a field.  Assume the monoid $M$ is also an inverse semi-group.   Then $k^\pi[M]$ is semi-simple if and only if $k[G_D]$ is semi-simple for every $\mathcal{D}$-class $D$. 
\end{proposition}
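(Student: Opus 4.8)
The plan is to follow the proof of Corollary~\ref{c4.5} line for line, replacing each untwisted ingredient by its twisted counterpart from the present section. Since the statement is a biconditional, I would prove the two implications separately, using Proposition~\ref{p5.5} for the forward direction and Proposition~\ref{p5.6} for the reverse.

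For the forward implication I would argue as follows. Suppose $k^\pi[M]$ is semi-simple. Then Proposition~\ref{p5.5} applies directly, with no appeal to the inverse-semi-group hypothesis, and yields that $k[G_D]$ is semi-simple for every $\mathcal{D}$-class $D$.

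For the reverse implication, suppose $k[G_D]$ is semi-simple for every $D$. Here I would invoke the standard semi-group-theoretic fact, already used in the proof of Corollary~\ref{c4.5}, that an inverse semi-group satisfies the bijection condition: every $\mathcal{L}$-class and every $\mathcal{R}$-class contains a unique idempotent, and the bijection $F:\mathbb{L}(D)\to\mathbb{R}(D)$ matches an $\mathcal{L}$-class $L$ with the $\mathcal{R}$-class sharing its idempotent, with $L$ and $R$ matched precisely when $R = F(L)$. Both hypotheses of Proposition~\ref{p5.6} are then in force -- condition (1) by assumption and condition (2) by the bijection condition just verified -- so Proposition~\ref{p5.6} gives that $k^\pi[M]$ is semi-simple.

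The step needing the most care, though it is not really an obstacle, is confirming that the inverse-semi-group structure genuinely supplies the bijection condition, since this is the sole place the inverse hypothesis enters and it accounts for the asymmetry between the two directions. All remaining technical content has been front-loaded into Propositions~\ref{p5.5} and~\ref{p5.6}, whose availability rests in turn on the strong compatibility of $\pi$: strong compatibility forces the matched-pair scalars $c(j,i)$ to be nonzero in the domain $k$, so that $\langle X,Y\rangle^\pi = c(j,i)\langle X,Y\rangle$ vanishes exactly when $\langle X,Y\rangle$ does, and the radical computations of Section~4 transfer to the twisted setting without change. The biconditional then follows at once.
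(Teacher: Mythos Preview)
Your proposal is correct and matches the paper's approach: the paper presents Proposition~\ref{p5.7} (along with Propositions~\ref{p5.1}--\ref{p5.6}) without an explicit proof, stating only that under the standing assumption of a strongly compatible twisting the Section~4 arguments carry over verbatim via the equivalence $\langle X,Y\rangle^\pi = 0 \Leftrightarrow \langle X,Y\rangle = 0$. Your argument, deducing the biconditional from Propositions~\ref{p5.5} and~\ref{p5.6} exactly as Corollary~\ref{c4.5} was deduced from Corollary~\ref{c4.41} and Proposition~\ref{p4.4}, is precisely this intended modification.
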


The various examples such as Brauer algebras, Temperly-Lieb algebras, and other parition algebras which were studied and shown to be cellular in \cite{Wil} and \cite{GX} are all twisted monoid algebras with a compatible twisting on a monoid satisfying the $R$-C.A. condition.  Thus they can be seen to be cell algebras by Theorem \ref{t5.1} without constructing the anti-isomorphism needed for the cellular structure. Related algebras which lack the anti-isomorphism $ * $, and hence are not cellular, could also be shown to be cell algebras by Theorem \ref{t5.1}.  We note again that questions such as whether an algebra is quasi-hereditary or semi-simple are not much harder to answer for cell algebras than for cellular algebras.

\end{document}